\newtheorem{thm}{Theorem}[section]
\newtheorem{cor}[thm]{Corollary}
\newtheorem{lem}[thm]{Lemma}
\newtheorem{prop}[thm]{Proposition}
\newcommand{\3}{\varepsilon}
\newcommand{\4}{\widetilde}
\newcommand{\Rr}{{\mathbb{R}}}
\newcommand{\Z}{{\mathbb Z}}
\newcommand{\cD}{{\mathcal D}}
\newcommand{\cX}{{\mathcal X}}
\newcommand{\ve}{\varepsilon}
\begin{document}
\title{Existence of singular rotationally symmetric gradient Ricci solitons in higher dimensions}
\author{Kin Ming Hui\\
%\thanks{ }\\
Institute of Mathematics, Academia Sinica\\
Taipei, Taiwan, R. O. C.}
\date{March 15, 2024}
\smallbreak \maketitle
\begin{abstract}
By using fixed point argument we give a proof for the existence of singular rotationally symmetric steady and expanding gradient Ricci solitons in higher dimensions with metric $g=\frac{da^2}{h(a^2)}+a^2g_{S^n}$ for some function $h$ where $g_{S^n}$ is the standard metric on the unit sphere $S^n$ in $\mathbb{R}^n$ for any $n\ge 2$. More precisely for any $\lambda\ge 0$ and $c_0>0$, we prove that there exist infinitely many solutions $h\in C^2((0,\infty);\mathbb{R}^+)$ for the equation $2r^2h(r)h_{rr}(r)=(n-1)h(r)(h(r)-1)+rh_r(r)(rh_r(r)-\lambda r-(n-1))$, $h(r)>0$, in $(0,\infty)$ satisfying $\underset{\substack{r\to 0}}{\lim}\,r^{\sqrt{n}-1}h(r)=c_0$ and prove the higher order asymptotic behaviour of the global singular solutions   near the origin. We also find conditions for the existence of unique global singular solution of such equation in terms of its asymptotic behaviour near the origin. 
\end{abstract}

\vskip 0.2truein

Keywords: Ricci flow, rotationally symmetric, singular Ricci solitons

AMS 2020 Mathematics Subject Classification: Primary  35J75 Secondary 53E20, 53C21

\vskip 0.2truein
\setcounter{equation}{0}
\setcounter{section}{0}

\section{Introduction}
\setcounter{equation}{0}
\setcounter{thm}{0}

Ricci flow is an important technique in geometry and has a lot of applications in geometry \cite{KL}, \cite{MT}, \cite{P1}, \cite{P2}. For example recently G.~Perelman  \cite{P1}, \cite{P2}, used Ricci flow to prove the Poincare conjecture. In the study of Ricci flow one is interested to study the Ricci solitons which are self-similar solutions of Ricci flow.
On the other hand by a limiting argument the behaviour of the Ricci flow near the singular time is 
usually similar to the behaviour of Ricci solitons. 

Hence in order to understand Ricci flow it is important to study the Ricci solitons. 
In \cite{B2} S.~Brendle used singular rotationally symmetric steady solitons to construct barrier functions which plays an important role in the proof there that confirms a conjecture of Perelman on $3$-dimensional ancient $\kappa$ solution to the Ricci flow. 
We refer the reader to the papers by  S.~Alexakis, D.~Chen and G.~Fournodavlos, S.~Brendle,  R.L.~Bryant,  H.D.~Cao,   D.~Zhou,  M.~Feldman, T.~Ilmanen and D.~Knopf, S.Y.~Hsu, Y.~Li and B.~Wang,  O.~Munteanu and N.~Sesum, P.~Petersen and W.~Wylie, \cite{ACF}, \cite{B1}, \cite{Br}, \cite{C}, \cite{CZ}, \cite{FIK}, \cite{H}, \cite{LW}, \cite{MS}, \cite{PW}, etc. 
and the book [CCGG] by B.~Chow, S.C.~Chu, D.~Glickenstein, C.~Guenther, J.~Isenberg, T.~Ivey, D.~Knopf, P.~Lu, F.~Luo and L.~Ni for some recent results on Ricci solitons.

We say that a Riemannian metric $g=(g_{ij})$ on a Riemannian manifold $M$  is a gradient Ricci soliton if there exist a smooth function $f$ on $M$ and a constant $\lambda\in\mathbb{R}$ such that the Ricci curvature $R_{ij}$ of the metric $g$ satisfies
\begin{equation}\label{gradient-soliton-eqn}
R_{ij}=\nabla_i\nabla_jf-\lambda g_{ij}\quad\mbox{ on }M.
\end{equation}
The gradient soliton is called an expanding gradient Ricci soliton if $\lambda>0$. It is called a steady gradient  Ricci soliton if $\lambda=0$ and it is called a shrinking gradient  Ricci soliton if $\lambda<0$.

Existence of rotationally symmetric steady and expanding $3$-dimensional gradient Ricci solitons were proved by R.L.~Bryant \cite{Br} using the phase method and by S.Y.~Hsu \cite{H} using fixed point argument. On the other hand as observed by R.L.~Bryant \cite{Br} for $n=2$ and B.~Chow, etc. (cf. Lemma 1.21 and Section 4 of Chapter 1 of \cite{CCGG}) for $n\ge 2$, for any $n\ge 2$ if $(M,g)$ is a $(n+1)$-dimensional rotational symmetric gradient Ricci soliton which satisfies \eqref{gradient-soliton-eqn} for some smooth function $f$ and constant $\lambda\in\mathbb{R}$
with
\begin{equation}\label{g-rotational-form}
g=dt^2+a(t)^2\,g_{S^n}
\end{equation}
where $g_{S^n}$ is the standard metric on the unit sphere $S^n$ in $\mathbb{R}^n$, then the Ricci curvature of $g$ is given by
\begin{equation}\label{ric-=}
Ric(g)=-\frac{na_{tt}(t)}{a(t)}\,dt^2+\left(n-1-a(t)a_{tt}(t)-(n-1)a_t(t)^2\right)\,g_{S^n}
\end{equation}  
and
\begin{equation}\label{hess-f}
Hess(f)=f_{tt}(t)\,dt^2+a(t)a_t(t)f_t(t)\,g_{S^n}.
\end{equation}
Hence by \eqref{gradient-soliton-eqn}, \eqref{ric-=} and \eqref{hess-f} (cf. \cite{ACF}, \cite{Br}, \cite{CCGG}), we get
\begin{equation}\label{a-f-eq1}
-na(t)a_{tt}(t)=a(t)(f_{tt}(t)-\lambda)
\end{equation}
and
\begin{equation}\label{a-f-eq2}
n-1-a(t)a_{tt}(t)-(n-1)a_t(t)^2=a(t)a_t(t)f_t(t)-\lambda a(t)^2.
\end{equation}
By eliminating  $f$ from \eqref{a-f-eq1} and \eqref{a-f-eq2} we get that $a(t)$ satisfies
\begin{equation}\label{a-eqn}
a(t)^2a_t(t)a_{ttt}(t)=a(t)a_t(t)^2a_{tt}(t)+a(t)^2a_{tt}(t)^2-(n-1)a(t)a_{tt}(t)-\lambda a(t)^3a_{tt}(t)-(n-1)a_t(t)^2+(n-1)a_t(t)^4.
\end{equation}
Note that we can express $g$ as
\begin{equation}\label{g=h-form}
g=\frac{da^2}{h(a^2)}+a^2\,g_{S^n}
\end{equation}
where $h(r)$, $r=a^2\ge 0$, and $a=a(t)$ satisfies
\begin{equation}\label{a-t-relation}
a_t(t)=\sqrt{h(a(t)^2)}.
\end{equation}
Then by \eqref{a-eqn} and a direct computation $h$ satisfies
\begin{equation}\label{h-eqn}
2r^2h(r)h_{rr}(r)=(n-1)h(r)(h(r)-1)+rh_r(r)(rh_r(r)-\lambda r-(n-1)),\quad h(r)>0.
\end{equation}
We are now interested in rotational symmetric gradient Ricci soliton which blows up at $r=0$
at the rate
\begin{equation}\label{h-origin-blow-up-rate}
\lim_{r\to 0}r^{\alpha}h(r)=c_0
\end{equation}
for some constants $\alpha>0$ and $c_0>0$. Let 
\begin{equation}\label{w-defn}
w(r)=r^{\alpha}h(r)\quad\forall r>0.
\end{equation}
By \eqref{h-eqn}, \eqref{w-defn} and a direct computation $w$ satisfies
\begin{align}\label{w-eqn1}
2r^2w(r)w_{rr}(r)=&2\alpha r w(r)w_r(r)+(n-1)(\alpha -1)r^{\alpha}w(r)+\alpha\lambda r^{\alpha +1}w(r)-(n-1)r^{\alpha +1}w_r(r)\notag\\
&\quad -\lambda r^{\alpha +2}w_r(r)+r^2w_r(r)^2-(\alpha^2+2\alpha -(n-1))w(r)^2.
\end{align}
Unless stated otherwise we now let $\alpha=\sqrt{n}-1>0$ for the rest of the paper. Then $\alpha^2+2\alpha -(n-1)=0$.  Hence by \eqref{w-eqn1} $w$ satisfies
\begin{align}\label{w-eqn2}
2r^2w(r)w_{rr}(r)=&2\alpha r w(r)w_r(r)+(n-1)(\alpha -1)r^{\alpha}w(r)+\alpha\lambda r^{\alpha +1}w(r)-(n-1)r^{\alpha +1}w_r(r)\notag\\
&\quad -\lambda r^{\alpha +2}w_r(r)+r^2w_r(r)^2
\end{align}
with $\alpha=\sqrt{n}-1>0$. We also impose the condition
\begin{equation}\label{a0=0}
\lim_{t\to 0^+}a(t)=0.
\end{equation}
Then by \eqref{a-t-relation} and \eqref{a0=0},
\begin{equation}\label{t-at-relation}
t=\int_0^{a(t)}\frac{d\rho}{\sqrt{h(\rho^2)}}.
\end{equation}
In the paper \cite{Br} R.L.~Bryant by using power series expansion around the singular point at the origin gave the local existence of singular solution of \eqref{h-eqn} near the origin which blows up at the rate \eqref{h-origin-blow-up-rate} for the case $n=2$. On the other hand by using phase plane analysis of the functions
\begin{equation*}
W=\frac{1}{f_t(t)+n\frac{a_t(t)}{a(t)}},\quad X=\sqrt{n}W\frac{a_t(t)}{a(t)},\quad Y=\frac{\sqrt(n-1)W}{a(t)},
\end{equation*}
S.~Alexakis, D.~Chen and G.~Fournodavlos, [ACF] gave a sketch of proof for the local existence of singular solution $(a(t), f(t))$, of \eqref{a-f-eq1},\eqref{a-f-eq2}, near the origin and its the asymptotic behaviour  as $t\to 0^+$ for the case $n\ge 2$. When $\lambda=0$, existence of global solution $(a(t), f(t))$, of \eqref{a-f-eq1},\eqref{a-f-eq2}, in $(0,\infty)$ is also mentioned without detailed proof in \cite{ACF}.

In this paper we will use fixed point argument for the function $w$ given by \eqref{w-defn} to give a new proof of the local existence of solution  $h$ of \eqref{h-eqn} satisfying \eqref{h-origin-blow-up-rate} for any constants $\lambda\in\Rr$, $c_0>0$, and $2\le n\in\Z^+$. For $\lambda\ge 0$ we will then use continuation method to extend the local singular solutions of \eqref{h-eqn}, \eqref{h-origin-blow-up-rate}, to global solutions of \eqref{h-eqn}, \eqref{h-origin-blow-up-rate}. We will also prove the higher order asymptotic behaviour of the local solutions  of \eqref{h-eqn}, \eqref{h-origin-blow-up-rate}, near the origin.

The main results we obtain in this paper are the following.

\begin{thm}\label{h-existence-thm}
Let $2\le n\in\Z^+$, $\lambda\ge 0$, $\alpha=\sqrt{n}-1$, $c_0>0$,  $c_1\in\Rr$ and
\begin{equation}\label{c2-defn}
c_2:=\frac{(n-1)(\alpha-1)}{2}=\frac{(n-1)(\sqrt{n}-2)}{2}.
\end{equation} 
There exists a  unique solution $h\in C^2((0,\infty))$ of \eqref{h-eqn} in $(0,\infty)$ which satisfies \eqref{h-origin-blow-up-rate} and \eqref{w-eqn=w-integral-eqn} in $(0,\ve)$ with $w$ given by \eqref{w-defn} for some constant $\ve>0$. 
\end{thm}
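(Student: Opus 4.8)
The plan is to derive the theorem from two ingredients: a local existence-and-uniqueness result for \eqref{h-eqn} near $r=0$, proved by a Banach fixed point argument applied to $w=r^{\alp}h$ as in \eqref{w-defn}, and a continuation argument (using $\lambda\ge 0$) that extends the local singular solution to all of $(0,\infty)$. For the first ingredient one rewrites \eqref{h-eqn} as an integral equation. On any interval where $h>0$, \eqref{h-eqn} and \eqref{w-eqn2} are equivalent under $w=r^{\alp}h$, and dividing \eqref{w-eqn2} by $2r^2w$ puts it in the form
\[
w_{rr}-\frac{\alp}{r}\,w_r=c_2\,r^{\alp-2}+G(r,w,w_r),
\]
where $c_2$ is the constant of \eqref{c2-defn} and every term of $G$ carries a strictly higher power of $r$ as $r\to 0$ than $r^{\alp-2}$. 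Since $\alp=\sqrt n-1>0$ and the Euler operator $\p_r^2-\frac{\alp}{r}\p_r$ has homogeneous solutions $1$ and $r^{\alp+1}$, it is inverted by integrating twice from the origin; the two constants of integration become $w(0)=c_0$ and the coefficient $c_1$ of the mode $r^{\alp+1}$, and the resulting equation \eqref{w-eqn=w-integral-eqn} displays an explicit lower-order profile $\psi=\psi_{c_0,c_1}$, a finite sum of powers of $r$ beginning with $c_0-\frac{c_2}{\alp}r^{\alp}+c_1r^{\alp+1}$ (plus a logarithmic term when $\lambda\ne 0$, arising because the forcing $r^{\alp-1}$ inside $G$ resonates with the mode $r^{\alp+1}$). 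One then checks the equivalence both ways: a positive $h\in C^2((0,\ve))$ solving \eqref{h-eqn} and \eqref{h-origin-blow-up-rate} produces a continuous $w$ satisfying \eqref{w-eqn=w-integral-eqn}, and conversely any continuous solution $w$ of \eqref{w-eqn=w-integral-eqn} on $(0,\ve)$ with $w(0)=c_0>0$ is automatically $C^2$ on $(0,\ve)$, stays positive for $\ve$ small, and yields $h=r^{-\alp}w\in C^2((0,\ve))$ solving \eqref{h-eqn} with $\lim_{r\to 0}r^{\alp}h(r)=c_0$.

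Next comes the fixed point. Let $\Phi$ send $w$ to the right-hand side of \eqref{w-eqn=w-integral-eqn}, and run the contraction on the complete metric space
\[
X_{\ve,M}=\bigl\{\,w\in C([0,\ve]):\ w(0)=c_0,\ \ |w(r)-\psi(r)|\le M\,r^{\mu}\ \ \mbox{for all }0<r\le\ve\,\bigr\},
\]
with $M$ large, $\mu$ fixed and slightly larger than $\alp+1$, and the weighted metric $d(w_1,w_2)=\sup_{0<r\le\ve}r^{-\mu}|w_1(r)-w_2(r)|$; on $X_{\ve,M}$ one has $w\ge c_0/2>0$ once $\ve$ is small, so the division by $w$ in $G$ is harmless. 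Estimating the iterated integrals defining $\Phi$, together with the once-integrated identity governing $w_r$, shows that $\Phi$ maps $X_{\ve,M}$ into itself and, for $\ve$ small depending on $n,\lambda,c_0,c_1,M$, is a contraction, the point being that each term of $G$ gains a positive power of $r$ over the forcing $r^{\alp-2}$, so the iterated integrals are small in the weighted metric. The Banach fixed point theorem then gives a unique $w\in X_{\ve,M}$ solving \eqref{w-eqn=w-integral-eqn}; a short argument shows that any solution of \eqref{w-eqn=w-integral-eqn} on a sufficiently small interval lies in some $X_{\ve,M}$, so the local singular solution, and hence $h=r^{-\alp}w$, is unique once $c_0$ and $c_1$ are fixed.

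Finally, globalisation, where $\lambda\ge 0$ enters. Wherever $r>0$ and $h>0$, \eqref{h-eqn} is a regular second-order ODE, so the local solution constructed above extends uniquely to a maximal interval $(0,R)$. Using $\lambda\ge 0$ one derives sign and monotonicity properties of $h$, together with a priori bounds on $h$ and $rh_r$ — the continuation estimates alluded to in the introduction — that prevent $h$ from vanishing or becoming infinite, and $h_r$ from blowing up, at any finite $r>0$; hence $R=\infty$. Combining this with the local uniqueness above and uniqueness for the regular ODE flow on $(0,\infty)$ gives the unique $h\in C^2((0,\infty))$ asserted in the theorem.

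I expect the fixed point step to be the main obstacle. One must choose $\mu$ and $M$ so that the double integrals of the singular forcing $c_2r^{\alp-2}$ and of the quadratic term $w_r^2/(2w)$ are simultaneously controlled in the weighted norm and the Lipschitz constant of $\Phi$ is $<1$ for small $\ve$. This is delicate because recovering $w_r$ from the twice-integrated equation costs a power of $r$ that is itself singular when $\alp<1$, i.e.\ $n<4$; because, for $\lambda\ne 0$, the forcing in $G$ sits exactly at the exponent $\alp-1$ that is resonant with the homogeneous mode $r^{\alp+1}$, so logarithmic factors must be tracked through the estimates; and because the genuine free parameter $c_1$ must be cleanly separated from the forced part of the expansion before the contraction can close. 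By comparison, the globalisation step is fairly routine once these a priori bounds are in hand.
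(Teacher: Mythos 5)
Your overall architecture (pass to $w=r^{\alpha}h$, solve an integral equation by a Banach fixed point near $r=0$, then continue to $(0,\infty)$ using a priori bounds for $\lambda\ge 0$ and standard ODE uniqueness away from the origin) is the same as the paper's, but the fixed point step --- which you yourself single out as the main obstacle --- is set up in a way that does not close, and this is exactly where the paper does something different. Your space $X_{\ve,M}$ and metric $d(w_1,w_2)=\sup_{0<r\le\ve} r^{-\mu}|w_1(r)-w_2(r)|$ control only $w$, whereas the right-hand side of \eqref{w-eqn=w-integral-eqn} (every term of your $G$) involves $w_r$ and $w_r^2/w$: on merely continuous $w$ the map is not well defined, and a contraction estimate in a $w$-only weighted sup metric cannot control differences of derivatives of the iterates. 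The paper avoids this by running the contraction on pairs $(w,v)$, with $v$ playing the role of $w_r$, in the norm $\max\bigl(\|w\|_{L^\infty},\|r^{1-\alpha}v\|_{L^\infty}\bigr)$ on the ball $\|(w,v)-(c_0,-c_2r^{\alpha-1})\|_{\cX_\3}\le c_0/10$ (Proposition 2.1), so that both $w$ and $w_r$ are estimated simultaneously.

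Two further points break your setup for small $n$. First, ``inverting by integrating twice from the origin'' is not possible when $n<4$: there $\alpha<1$ and the terms $\rho^{-1}w_r/w$ and $\rho^{-\alpha}w_r^2/w$ behave like $\rho^{\alpha-2}$, which is not integrable at $0$; this is precisely why \eqref{w-eqn=w-integral-eqn} integrates these two terms from $r$ to $\ve$ (the all-from-zero form \eqref{w-eqn=w-integral-eqn2} is used only for $n>4$), and why $c_1$ is tied to the truncation at $\ve$ rather than being a clean Taylor coefficient. Second, your exponent $\mu>\alpha+1$ excludes the actual solution when $n=2,3$: by \eqref{h-asymptotic-near-origin2} the deviation of $w$ from $c_0-\frac{c_2}{\alpha}r^{\alpha}+c_1r^{\alpha+1}$ contains a forced term of exact order $r^{2\alpha}$ with nonzero coefficient $-c_2(c_2+n-1)/\bigl(4c_0\alpha(1-\alpha)\bigr)$, and $2\alpha<\alpha+1$ when $\alpha<1$, so the constraint $|w-\psi|\le Mr^{\mu}$ fails near $0$ and $\Phi$ cannot map $X_{\ve,M}$ into itself. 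These defects are repairable (enlarge the profile, or simply adopt the paper's $(w,v)$-formulation with the weight $r^{1-\alpha}$), but as written the key local existence and uniqueness step has a genuine gap. Your globalisation sketch matches the paper's (Lemmas 3.1--3.4, adapted from \cite{H}), though like the paper you leave the continuation bounds to the cited argument rather than proving them.
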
 

\begin{thm}\label{h-asymptotic-behaviour-thm1}
Let $4<n\in\Z^+$,  $\lambda\ge 0$, $\alpha=\sqrt{n}-1$, $c_0>0$,  $c_1\in\Rr$ and
$c_2$ be given by \eqref{c2-defn}.
Then there exists a constant $0<\delta_0<1$ such that \eqref{h-eqn}  has a unique solution $h\in C^2((0,\infty))$  in $(0,\infty)$ which satisfies \eqref{h-origin-blow-up-rate} and 
\begin{align}\label{h-asymptotic-near-origin1}
h(r)=&\frac{1}{r^{\alpha}}\left\{c_0-\frac{c_2}{\alpha}r^{\alpha}+\left(\frac{c_1}{\alpha+1}-\frac{\alpha\lambda}{2(\alpha +1)^2}\right)r^{\alpha +1}+\frac{\alpha\lambda}{2\alpha +2}r^{\alpha +1}\log r+\frac{c_2^2+(n-1)c_2}{4c_0\alpha (\alpha -1)}r^{2\alpha}\right.\notag\\
&\quad \left.+o(1)r^{2\alpha}\right\}\quad\forall 0<r\le \delta_0. 
\end{align}
Moreover
\begin{align}\label{hr-asymptotic-near-origin1}
h_r(r)=&\frac{1}{r^{\alpha+1}}\left\{-\alpha c_0+\left(\frac{c_1}{\alpha+1}+\frac{\alpha^2\lambda}{2(\alpha +1)^2}\right)r^{\alpha +1}+\frac{\alpha\lambda}{2\alpha +2}r^{\alpha +1}\log r+\frac{c_2^2+(n-1)c_2}{4c_0(\alpha -1)}r^{2\alpha}\right.\notag\\
&\quad \left.+o(1)r^{2\alpha}\right\}\quad\forall 0<r\le \delta_0.
\end{align}
\end{thm}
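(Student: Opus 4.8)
The plan is to obtain \eqref{h-asymptotic-near-origin1} and \eqref{hr-asymptotic-near-origin1} by a finite bootstrap performed directly on the integral equation \eqref{w-eqn=w-integral-eqn} satisfied by $w=r^{\alpha}h$, starting from the solution furnished by Theorem \ref{h-existence-thm}. Dividing \eqref{w-eqn2} by $2r^2w$ puts it in the form
\begin{equation*}
Lw:=w_{rr}-\frac{\alpha}{r}\,w_r=c_2\,r^{\alpha-2}+\frac{\alpha\lambda}{2}\,r^{\alpha-1}+\frac{1}{2w}\Bigl(w_r^2-(n-1)r^{\alpha-1}w_r-\lambda r^{\alpha}w_r\Bigr)=:G[w](r),
\end{equation*}
with $c_2$ as in \eqref{c2-defn}. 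Since $Lw=r^{\alpha}\bigl(r^{-\alpha}w_r\bigr)_r$, $L$ is inverted by two integrations, its homogeneous solutions being $1$ and $r^{\alpha+1}$; the two integration constants will be matched to $c_0$ and $c_1$. From $L(r^{\beta})=\beta(\beta-1-\alpha)r^{\beta-2}$ one reads off that the forcing $c_2r^{\alpha-2}$ contributes $-\tfrac{c_2}{\alpha}r^{\alpha}$; the forcing $\tfrac{\alpha\lambda}{2}r^{\alpha-1}$ is resonant (it would require $\beta=\alpha+1$), and since $L\bigl(r^{\alpha+1}\log r\bigr)=(\alpha+1)r^{\alpha-1}$ it contributes $\tfrac{\alpha\lambda}{2(\alpha+1)}r^{\alpha+1}\log r$ together with its non-logarithmic companion $-\tfrac{\alpha\lambda}{2(\alpha+1)^2}r^{\alpha+1}$ coming from the iterated integral $\int s^{\alpha}\!\int \tau^{-\alpha}(\cdot)\,d\tau\,ds$; and any forcing $\mu r^{2\alpha-2}$ contributes $\tfrac{\mu}{2\alpha(\alpha-1)}r^{2\alpha}$. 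The hypothesis $n>4$ enters precisely here: it gives $\alpha=\sqrt{n}-1>1$, so $\alpha-1>0$ (keeping that coefficient finite, and $c_2>0$) and the exponents obey the strict hierarchy $r^{\alpha}\gg r^{\alpha+1}\log(1/r)\gg r^{\alpha+1}\gg r^{2\alpha}$ as $r\to0$.

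I would then iterate as follows. Feeding the crude estimate $w=c_0+o(1)$, together with the $w$- and $w_r$-bounds implied by \eqref{w-eqn=w-integral-eqn} and Theorem \ref{h-existence-thm}, into $G[w]$ makes the nonlinear bracket lower order than $c_2r^{\alpha-2}$; integrating twice and matching constants to $c_0,c_1$ gives
\begin{equation*}
w(r)=c_0-\frac{c_2}{\alpha}r^{\alpha}+O\!\bigl(r^{\alpha+1}\log(1/r)\bigr),\qquad w_r(r)=-c_2\,r^{\alpha-1}+O\!\bigl(r^{\alpha}\log(1/r)\bigr),
\end{equation*}
the second from differentiating the representation for $w$ (equivalently, from the representation of $w_r$ that $Lw=G[w]$ provides). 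Substituting this sharper pair back, the bracket becomes $\bigl(c_2^2+(n-1)c_2\bigr)r^{2\alpha-2}+o(r^{2\alpha-2})$ — the term $\lambda r^{\alpha}w_r$ being $O(r^{2\alpha-1})$ hence negligible — and $\tfrac{1}{2w}=\tfrac1{2c_0}+O(r^{\alpha})$, so
\begin{equation*}
G[w](r)=c_2\,r^{\alpha-2}+\frac{\alpha\lambda}{2}\,r^{\alpha-1}+\frac{c_2^2+(n-1)c_2}{2c_0}\,r^{2\alpha-2}+o\!\bigl(r^{2\alpha-2}\bigr).
\end{equation*}
One last double integration, with the homogeneous part taken to be $c_0+\tfrac{c_1}{\alpha+1}r^{\alpha+1}$ and the iterated integrals of the $o$-remainder shown to be $o(r^{2\alpha})$ (using \eqref{w-eqn=w-integral-eqn} to pin down their lower limits at $0$), yields \eqref{h-asymptotic-near-origin1} after multiplying by $r^{-\alpha}$; differentiating the same expansion termwise and inserting it into $h_r=r^{-\alpha-1}\bigl(rw_r-\alpha w\bigr)$ produces \eqref{hr-asymptotic-near-origin1}. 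Uniqueness of $h$ is inherited from Theorem \ref{h-existence-thm}.

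I expect the principal difficulties to be: (i) upgrading the control on $w_r$ to the \emph{sharp} asymptotics $w_r\sim-c_2r^{\alpha-1}$, rather than a mere $O$-bound, since the coefficient $\tfrac{c_2^2+(n-1)c_2}{4c_0\alpha(\alpha-1)}$ of $r^{2\alpha}$ in \eqref{h-asymptotic-near-origin1} is generated by $w_r^2$; (ii) handling the constants of integration so that, at each step, the $r^{\alpha+1}$ coefficient agrees with the $c_1$ built into \eqref{w-eqn=w-integral-eqn} (this homogeneous solution, unlike the constant one, is not pinned down by the blow-up rate \eqref{h-origin-blow-up-rate} alone); and (iii) carrying the logarithmic terms through the integrations and verifying that no $\log^2$ term, nor any $r^{\alpha+1}$ contribution beyond those displayed, is produced. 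Each iteration improves the remainder exponent by a fixed positive amount, so a bounded number of steps suffices to reach the stated $o(r^{2\alpha})$ precision.
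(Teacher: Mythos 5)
Your coefficient bookkeeping (the action of $L=\partial_r^2-\frac{\alpha}{r}\partial_r$ on $r^{\alpha}$, $r^{\alpha+1}\log r$, $r^{2\alpha}$, and the bracket $w_r^2-(n-1)r^{\alpha-1}w_r-\lambda r^{\alpha}w_r\approx (c_2^2+(n-1)c_2)r^{2\alpha-2}$) is consistent with \eqref{h-asymptotic-near-origin1}, and the bootstrap on an integral form of the equation is essentially the computation the paper performs in Proposition \ref{h-asymptotic-behaviour-near-0-prop1}. However, basing the argument on \eqref{w-eqn=w-integral-eqn} and the solution of Theorem \ref{h-existence-thm} creates a real problem with the constant $c_1$. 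For $n>4$ one has $\alpha>1$, so the tail integrals $\int_r^{\ve}$ in \eqref{w-eqn=w-integral-eqn} converge as $r\to0$; writing them as $\int_0^{\ve}-\int_0^r$ shows that the coefficient of $r^{\alpha+1}$ in the expansion you would obtain is $\frac{c_1+K}{\alpha+1}-\frac{\alpha\lambda}{2(\alpha+1)^2}$ with the solution-dependent constant $K=\frac{n-1}{2}\int_0^{\ve}\frac{\rho^{-1}w_r(\rho)}{w(\rho)}\,d\rho-\frac{1}{2}\int_0^{\ve}\frac{\rho^{-\alpha}w_r(\rho)^2}{w(\rho)}\,d\rho$, not the prescribed $c_1$. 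Hence your construction does not yield, for an arbitrarily given $c_1$, a solution satisfying \eqref{h-asymptotic-near-origin1} with that $c_1$; you would need surjectivity of the map $c_1\mapsto c_1+K(c_1)$, which you do not address — simply "taking the homogeneous part to be $c_0+\frac{c_1}{\alpha+1}r^{\alpha+1}$" (your difficulty (ii)) does not settle it. The paper avoids this by working, for $n>4$, with the integral equation \eqref{w-eqn=w-integral-eqn2} whose integrals are based at $0$ (Proposition \ref{w-local-existence-prop2}, Corollary \ref{h-local-existence-cor2}); there the free constant is exactly the $c_1$ appearing in \eqref{h-asymptotic-near-origin1}, and the global solution is obtained by rerunning the continuation argument of Theorem \ref{h-existence-thm} from that local solution rather than by quoting Theorem \ref{h-existence-thm} itself.

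The uniqueness step is a genuine gap as well. Theorem \ref{h-asymptotic-behaviour-thm1} asserts uniqueness in the class of solutions satisfying \eqref{h-origin-blow-up-rate} and the asymptotic expansion \eqref{h-asymptotic-near-origin1}, whereas Theorem \ref{h-existence-thm} gives uniqueness only in the class of solutions satisfying the integral identity \eqref{w-eqn=w-integral-eqn} with the given $c_1$; a competitor $h_1$ satisfying \eqref{h-asymptotic-near-origin1} is not a priori in that class, so uniqueness is not "inherited". What is needed — and what the paper does — is: set $w_1=r^{\alpha}h_1$, integrate \eqref{wrr-eqn2} from $0$ to obtain \eqref{w-eqn=w-integral-eqn2} with some constant $c_1'$, run the same expansion argument to see that $h_1$ satisfies \eqref{h-asymptotic-near-origin1} with $c_1'$ in place of $c_1$, compare the two expansions at order $r^{\alpha+1}$ (possible since $\alpha+1<2\alpha$) to force $c_1'=c_1$, and only then invoke the fixed-point uniqueness of Proposition \ref{w-local-existence-prop2} on $(0,\ve]$ followed by standard ODE uniqueness on $[\ve,\infty)$. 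Your proposal omits this bridge entirely. (Your difficulty (i), the sharp asymptotics $w_r\sim-c_2r^{\alpha-1}$, is unproblematic: it follows at once from the integral representation of $w_r$, cf.\ \eqref{wr-initial-value}.)
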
 

\begin{thm}\label{h-asymptotic-behaviour-thm2}
Let $n\in\{2,3,4\}$, $\alpha=\sqrt{n}-1$, $\lambda\ge 0$, $c_0>0$, $c_1\in\Rr$ and
$c_2$ be given by \eqref{c2-defn}.
Let $h\in C^2((0,\infty))$ be  given by Theorem \ref{h-existence-thm}. Then there exists a constant $0<\delta_0<1$ such that
\begin{equation}\label{h-asymptotic-near-origin2}
h(r)=\left\{\begin{aligned}
&\frac{1}{r^{\alpha}}\left(c_0-\frac{c_2}{\alpha}r^{\alpha}-\frac{c_2(c_2+n-1)}{4c_0\alpha (1-\alpha)}r^{2\alpha}+o(1)r^{2\alpha}\right)
\quad\forall 0<r\le \delta_0\quad\mbox{ if }n=2,3\\
&\frac{1}{r}\left(c_0+\frac{\lambda}{4}r^2\log r+o(1)r^2|\log r|\right)
\qquad\qquad\quad\forall 0<r\le \delta_0\quad\mbox{ if }n=4.
\end{aligned}\right.
\end{equation}
Moreover
\begin{equation}\label{hr-asymptotic-near-origin2}
h_r(r)=\left\{\begin{aligned}
&\frac{1}{r^{\alpha+1}}\left(-\alpha c_0-\frac{c_2(c_2+n-1)}{4c_0(1-\alpha)}r^{2\alpha}+o(1)r^{2\alpha}\right)\quad\forall 0<r\le \delta_0\quad\mbox{ if }n=2,3\\
&\frac{1}{r^2}\left(-c_0+\frac{\lambda}{4}r^2\log r+o(1)r^2|\log r|\right)\qquad\quad\forall 0<r\le \delta_0\quad\mbox{ if }n=4.
\end{aligned}\right.
\end{equation}

\end{thm}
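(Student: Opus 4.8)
The plan is to bootstrap the behaviour of $w=r^{\alpha}h$ near the origin directly from \eqref{w-eqn2}. Since $h=r^{-\alpha}w$ and $h_r=r^{-\alpha-1}(rw_r-\alpha w)$, the expansions \eqref{h-asymptotic-near-origin2}, \eqref{hr-asymptotic-near-origin2} follow once the corresponding two--term expansions of $w$ and $w_r$ are known. The starting data, from Theorem \ref{h-existence-thm}, are that $w\in C^2((0,\infty))$ solves \eqref{w-eqn2}, satisfies \eqref{w-eqn=w-integral-eqn} on some $(0,\ve)$, and $\lim_{r\to0}w(r)=c_0>0$; hence $c_0/2\le w\le 2c_0$ on some $(0,\delta_1]$, and \eqref{w-eqn=w-integral-eqn} yields a preliminary bound $w_r(r)=O(r^{\alpha-1})$ when $n=2,3$ and $w_r(r)=o(1)$ when $n=4$ (for which $c_2=0$). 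The key device is the divergence form of \eqref{w-eqn2}, obtained on $(0,\delta_1)$ by dividing by $2r^{\alpha+2}w>0$:
\begin{equation}\label{plan-div-form}
\frac{d}{dr}\Bigl(r^{-\alpha}w_r\Bigr)=\frac{c_2}{r^2}+\frac{\alpha\lambda}{2r}-\frac{(n-1)w_r}{2rw}-\frac{\lambda w_r}{2w}+\frac{w_r^2}{2r^{\alpha}w}.
\end{equation}
The argument then splits according to whether $0<\alpha<1$ (the cases $n=2,3$) or $\alpha=1$ (the case $n=4$), since this fixes the ordering of the powers $r^{\alpha},\,r^{2\alpha},\,r^{\alpha+1},\,r^{\alpha+1}|\log r|$ near $0$.

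For $n=2,3$ one has $0<\alpha<1$, so $1\gg r^{\alpha}\gg r^{2\alpha}\gg r^{\alpha+1}|\log r|$ near $0$. First I would substitute the preliminary bound $w_r=O(r^{\alpha-1})$ and $w=c_0+o(1)$ into the right side of \eqref{plan-div-form}, obtaining $\tfrac{c_2}{r^2}+O(r^{\alpha-2})$; integrating over $[r,\delta_1]$ (using $\int_r^{\delta_1}\rho^{\alpha-2}\,d\rho=O(r^{\alpha-1})$ because $\alpha<1$) and multiplying by $r^{\alpha}$ sharpens this to $w_r(r)=-c_2r^{\alpha-1}+O(r^{2\alpha-1})$. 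Substituting this sharper estimate back, and using $w^{-1}=c_0^{-1}(1+o(1))$, the $-\tfrac{(n-1)w_r}{2rw}$ and $\tfrac{w_r^2}{2r^{\alpha}w}$ terms combine to $\tfrac{c_2(c_2+n-1)}{2c_0}r^{\alpha-2}+o(r^{\alpha-2})$, while $\tfrac{\alpha\lambda}{2r}=O(r^{-1})$ and $-\tfrac{\lambda w_r}{2w}=O(r^{\alpha-1})$ are both $o(r^{\alpha-2})$; hence the right side of \eqref{plan-div-form} equals $\tfrac{c_2}{r^2}+\tfrac{c_2(c_2+n-1)}{2c_0}r^{\alpha-2}+o(r^{\alpha-2})$. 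Integrating again over $[r,\delta_1]$, using $\int_r^{\delta_1}\rho^{\alpha-2}\,d\rho=\tfrac{r^{\alpha-1}}{1-\alpha}+O(1)$ and absorbing the $O(1)$ term into $o(r^{\alpha-1})$, then multiplying by $r^{\alpha}$, I get
\[
w_r(r)=-c_2r^{\alpha-1}-\frac{c_2(c_2+n-1)}{2c_0(1-\alpha)}r^{2\alpha-1}+o\bigl(r^{2\alpha-1}\bigr);
\]
a final integration over $[0,r]$ (legitimate since $\alpha-1>-1$), using $w(0^+)=c_0$, gives
\[
w(r)=c_0-\frac{c_2}{\alpha}r^{\alpha}-\frac{c_2(c_2+n-1)}{4c_0\alpha(1-\alpha)}r^{2\alpha}+o\bigl(r^{2\alpha}\bigr).
\]
Feeding these into $h=r^{-\alpha}w$ and $h_r=r^{-\alpha-1}(rw_r-\alpha w)$ yields \eqref{h-asymptotic-near-origin2}, \eqref{hr-asymptotic-near-origin2} for $n=2,3$; note that the $r^{\alpha+1}$ and $r^{\alpha+1}\log r$ modes of Theorem \ref{h-asymptotic-behaviour-thm1} are now of strictly higher order than $r^{2\alpha}$ and disappear into the remainder.

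For $n=4$ one has $\alpha=1$ and $c_2=0$, so \eqref{plan-div-form} becomes $\tfrac{d}{dr}(r^{-1}w_r)=\tfrac{\lambda}{2r}-\tfrac{3w_r}{2rw}-\tfrac{\lambda w_r}{2w}+\tfrac{w_r^2}{2rw}$. From $w=c_0+o(1)$ and $w_r=o(1)$ all terms involving $w_r$ are $o(1)/r$, so the right side is $\tfrac{\lambda}{2r}+o(1)/r$; integrating over $[r,\delta_1]$ gives $r^{-1}w_r(r)=\tfrac{\lambda}{2}\log r+o(|\log r|)$, i.e.\ $w_r(r)=\tfrac{\lambda}{2}r\log r+o(r|\log r|)$ (which is just $o(r|\log r|)$ when $\lambda=0$); integrating once more over $[0,r]$ gives $w(r)=c_0+\tfrac{\lambda}{4}r^2\log r+o(r^2|\log r|)$, and then $h=r^{-1}w$, $h_r=r^{-2}(rw_r-w)$ produce \eqref{h-asymptotic-near-origin2}, \eqref{hr-asymptotic-near-origin2} for $n=4$. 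Taking $0<\delta_0<1$ small enough that all these estimates hold on $(0,\delta_0]$ finishes the proof.

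The hard part will be the uniform control of remainders: one must check that \emph{every} term produced on the right of \eqref{plan-div-form} --- all the nonlinear cross terms, and the linear $r^{\alpha+1}$ and $r^{\alpha+1}\log r$ modes --- is genuinely $o(r^{2\alpha})$, resp.\ $o(r^2|\log r|)$, in the final expansion of $w$; this is precisely where the hypotheses $0<\alpha<1$, resp.\ $\alpha=1$ together with $c_2=0$, are used, the crucial observation for $n=4$ being that $2\alpha$ and $\alpha+1$ coincide, so that the vanishing of $c_2$ is what annihilates the otherwise resonant $r^2$ term and leaves $\tfrac{\lambda}{4}r^2\log r$ as the leading correction. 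A subsidiary, more routine, task is to justify the preliminary bound on $w_r$ that starts the bootstrap; if this is not read off directly from \eqref{w-eqn=w-integral-eqn} it follows from a short analysis of the Riccati-type equation satisfied by $u=r^{-\alpha}w_r$, in which the forcing $c_2/r^2$ dominates while the coefficient of $u$ is $O(r^{\alpha-1})$ and that of $u^2$ is $O(r^{\alpha})$, both integrable near $0$.
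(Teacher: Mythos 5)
Your proposal is correct and follows essentially the same route as the paper: the paper's Proposition 2.6 likewise integrates the divergence form $(r^{-\alpha}w_r)_r=\dots$ (via the integral identity \eqref{w-eqn=w-integral-eqn}), substitutes the known limits $w\to c_0$ and $r^{1-\alpha}w_r\to -c_2$ from \eqref{wr-initial-value} to extract the $r^{2\alpha-1}$ (resp.\ $r\log r$) correction in $w_r$, and integrates once more, exactly as you do. The only cosmetic difference is that you run a two-pass bootstrap for $n=2,3$, whereas the paper gets the second-order coefficient in one pass by using the ratio bounds coming from \eqref{h-origin-blow-up-rate} and \eqref{wr-initial-value} directly.
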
 

Note that  the singular solutions $h$ of \eqref{h-eqn} in $(0,\infty)$ given by Theorem \ref{h-existence-thm}, Theorem \ref{h-asymptotic-behaviour-thm1} and  Theorem \ref{h-asymptotic-behaviour-thm2} satisfies \eqref{h-origin-blow-up-rate} with $\alpha=\sqrt{n}-1$. Moreover by \eqref{g-rotational-form} the solitons constructed in Theorem \ref{h-existence-thm} and Theorem \ref{h-asymptotic-behaviour-thm1} are complete at $t=+\infty$.  A natural question to ask is that does there exist any other singular solution of \eqref{h-eqn} in $(0,\ve)$ for some constant $\ve>0$ which blow-up at a different rate at the origin. We answer this question in the negative. More precisely we prove the following result.

\begin{thm}\label{blow-up-rate-at-origin-thm}
Let $2\le n\in\Z^+$, $\lambda\in\Rr$, $\3>0$ and $c_0>0$. Suppose $h\in C^2((0,\3))$ is a solution of \eqref{h-eqn} in $(0,\3)$ which satisfies \eqref{h-origin-blow-up-rate} for some constant $\alpha>0$. Then $\alpha=\sqrt{n}-1$.
\end{thm}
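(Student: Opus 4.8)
\emph{Proof proposal.} The plan is to analyse the logarithmic derivative $P:=rw_r/w$ of $w:=r^\alpha h$. Since $h$ solves \eqref{h-eqn}, the function $w$ solves \eqref{w-eqn1}, and \eqref{h-origin-blow-up-rate} says $w(r)\to c_0>0$ as $r\to0^+$; hence there is $0<\3_1\le\3$ with $c_0/2\le w\le 2c_0$ on $(0,\3_1]$, so that $P\in C^1((0,\3_1))$ is well defined. Using $rwP'=rw_r+r^2w_{rr}-r^2w_r^2/w$ to eliminate $r^2w_{rr}$ from \eqref{w-eqn1} and dividing by $w^2$, one obtains on $(0,\3_1)$
\[
2rP'=-F(P)+\frac{r^\alpha}{w}\Big\{(n-1)(\alpha-1-P)+\lambda r(\alpha-P)\Big\},\qquad F(\xi):=\xi^2-2(\alpha+1)\xi+\alpha^2+2\alpha-(n-1).
\]
The quadratic $F$ has discriminant $4n$, so $F(\xi)=(\xi-P_-)(\xi-P_+)$ with $P_\pm=\alpha+1\pm\sqrt n$; note $P_+>0$ always, while $P_-=0$ exactly when $\alpha=\sqrt n-1$. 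Since $1/w$ is bounded on $(0,\3_1]$, the change of variable $s=-\log r$ (so $s\to+\infty$ as $r\to0^+$) turns this into the asymptotically autonomous scalar ODE
\[
\frac{dP}{ds}=\tfrac12F(P)+E(s),\qquad |E(s)|\le Ce^{-\alpha s}\big(1+|P(s)|\big).
\]

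The first and main step is to show $P$ stays bounded as $s\to\infty$. Since $\tfrac12F(\xi)\ge\tfrac14\xi^2$ for $|\xi|$ large while $|E(s)|$ is small compared with $P(s)^2$ once $|P(s)|$ is large and $s$ is large, if $P(s_0)>P_++1$ for some large $s_0$ then $\tfrac{dP}{ds}\ge\tfrac14P^2$ from $s_0$ onward, forcing $P$ to blow up at a finite value of $s$ — impossible, since $P$ is finite and $C^1$ on all of $(0,\3_1)$. The same estimate, together with the fact that $\tfrac12F$ has a definite positive sign on $\{\xi<P_--\eta\}$ and on $\{\xi>P_++\eta\}$ for each $\eta>0$ while $E(s)\to0$, rules out large negative excursions as well and yields
\[
P_-\le\liminf_{s\to\infty}P(s)\le\limsup_{s\to\infty}P(s)\le P_+.
\]

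Next, $P(s)$ converges: if $m:=\liminf P<\ell:=\limsup P$, pick $m<a<b<\ell$, so $[a,b]\subset(P_-,P_+)$ where $\tfrac12F\le-\mu<0$; then for $s$ large $\tfrac{dP}{ds}<-\mu/2$ whenever $P(s)\in[a,b]$, so $P$ can never again increase from below $a$ to above $b$, contradicting $m<\ell$. Hence $P(s)\to P_\infty\in[P_-,P_+]$, and $P_\infty$ must be a zero of $F$ (otherwise $\tfrac{dP}{ds}\to\tfrac12F(P_\infty)\neq0$, impossible for a convergent $P$), i.e.\ $P_\infty\in\{P_-,P_+\}$. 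Finally, from $\frac{d}{ds}\log w(e^{-s})=-P(s)$ and $w(e^{-s})\to c_0\in(0,\infty)$ we see that $\int^\infty P(s)\,ds$ converges, which forces $P_\infty=0$; since $P_+>0$, we conclude $P_-=0$, that is $\alpha=\sqrt n-1$.

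The hard part is the ODE analysis of $P$ in the second and third steps: one must carefully play the Riccati-type blow-up coming from $\tfrac12F(P)$ against the exponentially small but $P$-dependent remainder $E(s)$, both to prevent $P$ from escaping to infinity and to prevent persistent oscillation between the two (perturbed) equilibria $P_\pm$. The derivation of the equation for $P$, the value $4n$ of the discriminant, and the final bookkeeping are routine.
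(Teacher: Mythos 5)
Your proposal is correct, and it takes a genuinely different route from the paper. Both arguments revolve around the Riccati structure of the logarithmic derivative: the paper works with $q=rh_r/h$ as in \eqref{q-defn}, produces via the mean value theorem a sequence $r_i\to 0$ with $q(r_i)\to-\alpha$ (cf.\ \eqref{wri-eqn}), rewrites the $q$-equation with an integrating factor as the representation \eqref{q-integral-represenation}, and then splits into two cases, using l'Hospital's rule in the unbounded case to force $-\alpha=\tfrac{1}{2}(\alpha^2-(n-1))$, i.e.\ $\alpha=\sqrt n-1$. You instead study $P=rw_r/w=\alpha+q$ in the variable $s=-\log r$, exhibit the equation as an asymptotically autonomous perturbation $\tfrac{dP}{ds}=\tfrac12 F(P)+E(s)$ of a scalar quadratic flow, prove boundedness, trapping in $[P_-,P_+]$, convergence of $P(s)$ to a root of $F$, and finally use the convergence of $\int^\infty P\,ds$ (forced by $w(e^{-s})\to c_0\in(0,\infty)$) to identify the limit as $0$, hence $P_-=0$ and $\alpha=\sqrt n-1$. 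Your route buys more: it yields genuine convergence $rh_r(r)/h(r)\to-\alpha$ as $r\to0^+$ rather than convergence only along a special sequence, and it replaces the paper's somewhat delicate l'Hospital step (applied along a subsequence where the limit of $q$ is not known a priori) by a standard and robust phase-line argument; the price is the longer qualitative ODE analysis, whereas the paper's computation is shorter. One small repair is needed in your first step: $\tfrac12F(\xi)\ge\tfrac14\xi^2$ and the domination of $E$ by $\tfrac18 P^2$ hold only for $|\xi|$ larger than some constant $K$ depending on $\alpha,n,c_0,\lambda$, not already at the level $P_++1$; so the Riccati blow-up argument should be run above such a $K$ (giving boundedness), after which your $\eta$-argument on $\{\xi>P_++\eta\}$ and $\{\xi<P_--\eta\}$, where $F$ is bounded below by a positive constant and $E(s)\to0$, gives the trapping $P_-\le\liminf P\le\limsup P\le P_+$ exactly as you describe; this is a cosmetic fix and does not affect the correctness of the argument.
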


The plan of the paper is as follows. In section two we will prove the local existence of infinitely many singular solutions of \eqref{h-eqn}, \eqref{h-origin-blow-up-rate}, in a neighbourhood of the origin  and conditions for uniqueness of local singular solutions are given. We will also prove the higher order asymptotic behaviour of these local solutions near the origin. In section three we will prove the global existence of infinitely many singular solutions of \eqref{h-eqn}, \eqref{h-origin-blow-up-rate} and conditions for uniqueness of global singular solution are given. In section four we will prove the asymptotic behaviour of $a(t)$ near the origin.

\section{Local existence, uniqueness and asymptotic behaviour of singular solutions near the origin}
\setcounter{equation}{0}
\setcounter{thm}{0}

In this section for any $2\le n\in\Z^+$, $\lambda\in\Rr$  and $c_0>0$, we will prove the local existence of infinitely many singular solutions of \eqref{h-eqn} in $(0,\ve)$ which satisfy \eqref{h-origin-blow-up-rate} for some constant $\ve>0$. Under some mild conditions on the singular solutions of \eqref{h-eqn} in $(0,\ve)$ we will also prove the uniqueness of local singular solutions of \eqref{h-eqn} in $(0,\ve)$ satisfying \eqref{h-origin-blow-up-rate}. We first observe that if $h\in C^2((0,\3];\Rr^+)$ is a solution of \eqref{h-eqn} in $(0,\3]$ for some constant $\3>0$ which satisfies \eqref{h-origin-blow-up-rate} for some constant $c_0>0$ and $w$ is given by \eqref{w-defn} with $\alpha=\sqrt{n}-1$
, then by \eqref{h-origin-blow-up-rate}, \eqref{w-defn} and \eqref{w-eqn2}, $w>0$  satisfies 
\begin{equation}\label{wrr-eqn}
w_{rr}(r)=\frac{\alpha}{r}w_r(r)+\frac{(n-1)(\alpha -1)}{2}r^{\alpha-2}+\frac{\alpha\lambda}{2} r^{\alpha-1}-\frac{(n-1)r^{\alpha-1}w_r(r)}{2w(r)}
-\frac{\lambda r^{\alpha}w_r(r)}{2w(r)}+\frac{w_r(r)^2}{2w(r)}
\end{equation}
in $(0,\3]$ and
\begin{equation}\label{w0=c0}
w(0)=c_0
\end{equation}
if $w\in C([0,\3];\Rr^+)$. Hence the existence of solution  $h\in C^2((0,\3];\Rr^+)$ of \eqref{h-eqn} in $(0,\3]$ which satisfies
\eqref{h-origin-blow-up-rate} is equivalent to the existence of solution $w\in C^2((0,\3];\Rr^+)\cap C([0,\3];\Rr^+)$ of \eqref{wrr-eqn} in $(0,\3]$ which satisfies \eqref{w0=c0}. Note that \eqref{wrr-eqn} is equivalent to 
\begin{align}
(r^{-\alpha}w_r)_r(r)=&c_2r^{-2}+\frac{\alpha\lambda}{2}r^{-1}
-\frac{(n-1)r^{-1} w_r(r)}{2w(r)}-\frac{\lambda w_r(r)}{2w(r)}+\frac{r^{-\alpha}w_r(r)^2}{2w(r)}\quad\forall 0<r\le\3\label{wrr-eqn2}\\
\Leftrightarrow\quad r^{-\alpha}w_r(r)=&-c_2r^{-1}+c_1+\frac{\alpha\lambda}{2}\log r
+\frac{(n-1)}{2}\int_r^{\ve}\frac{\rho^{-1}w_r(\rho)}{w(\rho)}\,d\rho-\frac{\lambda}{2}\int_0^r\frac{w_r(\rho)}{w(\rho)}\,d\rho\notag\\
&\quad -\frac{1}{2}
\int_r^{\ve}\frac{\rho^{-\alpha}w_r(\rho)^2}{w(\rho)}\,d\rho\qquad\qquad\forall 0<r\le\3\notag\\
\Leftrightarrow\qquad\,\, w_r(r)=&-c_2r^{\alpha-1}+c_1r^{\alpha}+\frac{\alpha\lambda}{2}r^{\alpha}\log r
+r^{\alpha}\left\{\frac{(n-1)}{2}\int_r^{\ve}\frac{\rho^{-1}w_r(\rho)}{w(\rho)}\,d\rho-\frac{\lambda}{2}\int_0^r\frac{w_r(\rho)}{w(\rho)}\,d\rho\right.\notag\\
&\quad \left.-\frac{1}{2}
\int_r^{\ve}\frac{\rho^{-\alpha}w_r(\rho)^2}{w(\rho)}\,d\rho\right\}\qquad\qquad\forall 0<r\le\3
\label{w-eqn=w-integral-eqn}
\end{align}
for some cosntant $c_1\in\Rr$.
This suggests one to use fixed point argument to prove the existence of solution $w\in C^2((0,\3];\Rr^+)\cap C([0,\3];\Rr^+)$ of \eqref{wrr-eqn} in $(0,\3]$ which satisfies \eqref{w0=c0}. 

\begin{prop}\label{w-local-existence-prop}
Let $2\le n\in\Z^+$, $\alpha=\sqrt{n}-1$, $\lambda, c_1\in\Rr$, $c_0>0$  and let $c_2$ be given by \eqref{c2-defn}.  Then there exists a constant $0<\3<1$ such that \eqref{wrr-eqn} has a unique solution $w\in C^2((0,\3];\Rr^+)\cap C([0,\3];\Rr^+)$ in $(0,\3]$ which satisfies   \eqref{w0=c0} and \eqref{w-eqn=w-integral-eqn}. Moreover
\begin{equation}\label{wr-initial-value}
\lim_{r\to 0^+}r^{1-\alpha}w_r(r)=-c_2
\end{equation}
holds.
\end{prop}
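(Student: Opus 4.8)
The plan is to set up a fixed point argument in the space of $C^1$ functions on $[0,\ve]$ with a weighted norm controlling the derivative. Write the problem in terms of $v=w_r$ rather than $w$, using the integrated form \eqref{w-eqn=w-integral-eqn}. Given a candidate pair $(w,v)$ with $w(r)=c_0+\int_0^r v(\rho)\,d\rho$, define a map $\Phi$ sending $(w,v)$ to the new derivative
\begin{align*}
(\Phi v)(r)=&-c_2r^{\alpha-1}+c_1r^{\alpha}+\frac{\alpha\lambda}{2}r^{\alpha}\log r
+r^{\alpha}\left\{\frac{(n-1)}{2}\int_r^{\ve}\frac{\rho^{-1}v(\rho)}{w(\rho)}\,d\rho-\frac{\lambda}{2}\int_0^r\frac{v(\rho)}{w(\rho)}\,d\rho\right.\\
&\quad \left.-\frac{1}{2}\int_r^{\ve}\frac{\rho^{-\alpha}v(\rho)^2}{w(\rho)}\,d\rho\right\}.
\end{align*}
The natural closed set on which to run the contraction is
$$
\cX_{\ve}=\left\{v\in C((0,\ve];\Rr):\ \sup_{0<r\le\ve}r^{1-\alpha}\left|v(r)+c_2r^{\alpha-1}\right|\le 1\right\},
$$
or a variant thereof; the key structural point is that the leading term $-c_2r^{\alpha-1}$ of $v$ is prescribed, while the correction $v(r)+c_2r^{\alpha-1}$ is $o(r^{\alpha-1})$, in fact $O(r^{\alpha})$ up to the logarithmic term. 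On such $v$ one has $w(r)=c_0+O(r^{\alpha})$ (since $\int_0^r \rho^{\alpha-1}\,d\rho=r^\alpha/\alpha$), so for $\ve$ small $w(r)\ge c_0/2>0$ and all the integrands are integrable near $0$: $\rho^{-1}v/w\sim \rho^{\alpha-2}$, $\rho^{-\alpha}v^2/w\sim \rho^{\alpha-2}$, and $v/w\sim\rho^{\alpha-1}$ are all integrable since $\alpha>-1$ and $\alpha-1>-1$ — wait, one must check $\alpha-2>-1$, i.e. $\alpha>1$; for $\alpha\le 1$ ($n\le 4$) the integral $\int_r^\ve \rho^{\alpha-2}\,d\rho$ diverges as $r\to0$ like $r^{\alpha-1}$, but this is exactly absorbed by the prefactor $r^\alpha$, giving a contribution $O(r^{2\alpha-1})\cdot r$ ... more carefully $r^\alpha\cdot r^{\alpha-1}=r^{2\alpha-1}$, which is still $o(r^{\alpha-1})$ since $\alpha>0$. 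So the map is well-defined on $\cX_\ve$ regardless of the sign of $\alpha-1$, and this case distinction is the first place care is needed.

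Next I would verify that $\Phi$ maps $\cX_\ve$ into itself and is a contraction, for $\ve$ chosen small depending on $n,\lambda,c_1,c_0$. For the self-mapping, estimate $r^{1-\alpha}|(\Phi v)(r)+c_2r^{\alpha-1}|=r\,|c_1+\frac{\alpha\lambda}{2}\log r+\{\cdots\}|$; the bracketed integral terms are bounded by $C(\ve)\to 0$ using $w\ge c_0/2$ and the bound on $v$, and $r|\log r|\to0$, so the whole thing is $\le C\ve(1+|\log\ve|)\le 1$ for $\ve$ small. For the contraction estimate, given $v_1,v_2\in\cX_\ve$ with associated $w_1,w_2$, one has $|w_1-w_2|(r)\le \int_0^r|v_1-v_2|\le \|v_1-v_2\|\cdot r^\alpha/\alpha$ in the weighted norm, and then the difference $(\Phi v_1)-(\Phi v_2)$ splits into terms involving $v_1^2-v_2^2=(v_1-v_2)(v_1+v_2)$, $v_1/w_1-v_2/w_2$, etc., each estimated using $w_i\ge c_0/2$; every term carries an extra power of $r$ or $\ve$ relative to $\|v_1-v_2\|$, yielding a contraction constant $\le C\ve(1+|\log\ve|)<1$. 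The Banach fixed point theorem then produces a unique $v\in\cX_\ve$, hence a unique $w\in C([0,\ve];\Rr^+)$ with $w(0)=c_0$ solving \eqref{w-eqn=w-integral-eqn}; differentiating the integral identity shows $w\in C^2((0,\ve];\Rr^+)$ and that \eqref{wrr-eqn} holds, and the bound $v\in\cX_\ve$ gives $r^{1-\alpha}w_r(r)=-c_2+O(r)+O(r|\log r|)\to -c_2$, which is \eqref{wr-initial-value}. Uniqueness in the class stipulated in the proposition follows because any such solution satisfies the integral equation and its derivative lies in $\cX_\ve$ after possibly shrinking $\ve$, so it coincides with the fixed point.

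The main obstacle I anticipate is the bookkeeping of the weighted estimates uniformly across the regimes $\alpha>1$, $\alpha=1$ (i.e. $n=4$), and $\alpha<1$, since the behaviour of $\int_r^\ve\rho^{\alpha-2}\,d\rho$ and $\int_r^\ve\rho^{-\alpha}v^2/w\,d\rho$ near $r=0$ changes character at $\alpha=1$ and the logarithmic term $r^\alpha\log r$ in the formula for $w_r$ must be tracked carefully. A secondary technical point is choosing the weighted norm so that the quadratic term $v^2/w$ does not destroy the contraction — using the bound $|v|\le (|c_2|+1)r^{\alpha-1}$ pointwise on $\cX_\ve$ rather than merely the norm is what makes $v_1^2-v_2^2$ controllable. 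Once the function space and norm are pinned down correctly, the rest is a routine Banach fixed point argument.
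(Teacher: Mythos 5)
Your proposal is correct and follows essentially the same route as the paper: a Banach fixed point argument for the pair $(w,v)=(c_0+\int_0^r v,\,w_r)$ in a ball of a weighted space where the correction $v(r)+c_2r^{\alpha-1}$ is measured in the norm $\sup r^{1-\alpha}|\cdot|$, with the same self-map and contraction estimates and the same case distinction at $n=4$ (where $\int_r^{\ve}\rho^{\alpha-2}d\rho\sim|\log r|$ is absorbed using $r|\log r|\le Cr^{1/2}$), and \eqref{wr-initial-value} read off from the fixed-point bounds. The only difference is cosmetic: the paper keeps $w$ and $v$ as independent components of the Banach space rather than folding $w$ into a functional of $v$.
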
 
\begin{proof}
For any $\3>0$ we define the Banach space 
\begin{align*}
\cX_\3:=&\{(w,v): w\in  C( [0,\3];\Rr^+),v\in C( (0,\3];\Rr)\mbox{ such that }r^{1-\alpha}v(r)\mbox{ can be extended to a}\\
&\mbox{ function in }C( [0,\3];\Rr)\}
\end{align*} 
with a norm given by
\begin{equation*}
\|(w,v)\|_{\cX_\3}=\max\left(\|w\|_{L^{\infty}\left([0, \3]\right)} ,\|r^{1-\alpha}v(r)\|_{L^{\infty}\left([0, \3]\right)} \right). 
\end{equation*}
For any $(w,v)\in \cX_\3,$ we define  
\begin{equation*}
\Phi(w,v):=\left(\Phi_1(w,v),\Phi_2(w,v)\right) 
\end{equation*}
where 
\begin{equation}\label{contraction-map-defn}
\left\{\begin{aligned}
\Phi_1(w,v)(r)=&c_0+\int_0^r v(\rho)\,d\rho,\\
\Phi_2(w,v)(r)=&-c_2r^{\alpha-1}+c_1r^{\alpha}+\frac{\alpha\lambda}{2}r^{\alpha}\log r
+r^{\alpha}\left\{\frac{(n-1)}{2}\int_r^{\ve}\frac{\rho^{-1}v(\rho)}{w(\rho)}\,d\rho
-\frac{\lambda}{2}\int_0^r\frac{v(\rho)}{w(\rho)}\,d\rho\right.\\
&\quad \left.-\frac{1}{2}
\int_r^{\ve}\frac{\rho^{-\alpha}v(\rho)^2}{w(\rho)}\,d\rho\right\}
\end{aligned}\right.
\end{equation}
for any $0<r\leq\3$. Let 
\begin{equation}\label{closed-set-defn}
\cD_{\3}:=\left\{\|(w,v)-(c_0,-c_2r^{\alpha-1})\|_{\cX_\3}\le c_0/10\right\}.
\end{equation}
Since $(c_0,-c_2r^{\alpha-1})\in\cD_{\3}$, $\cD_{\3}\ne\phi$.
We will show that  there exists    $\ve\in(0,1)$ such that the map  $(w,v)\mapsto\Phi(w,v)$ has a unique fixed point  in the closed subspace $\cD_{\3}$. Let
\begin{equation*}
\3_1=\min\left(\frac{1}{2}, \left(\frac{c_0\alpha}{10|c_2|+c_0}\right)^{1/\alpha}\right).
\end{equation*}
We first prove  that  $\Phi(\cD_{\3})\subset \cD_{\3}$ for sufficiently small $\ve\in (0,\ve_1)$. For any $\ve\in (0,\ve_1)$, $(w,v)\in \cD_{\3},$  $0\leq r<\ve $, by \eqref{closed-set-defn} we have
\begin{equation}\label{w-range}
\frac{9c_0}{10}\le w(r)\le \frac{11c_0}{10}\quad\forall 0<r\le\ve
\end{equation}
and
\begin{equation}\label{v-bd}
|v(r)|\le c_3r^{\alpha -1}\quad\forall 0<r\le\ve
\end{equation}
where $c_3=|c_2|+(c_0/10)$. Hence by \eqref{v-bd},
\begin{align}\label{phi1-bd}
&|\Phi_1(w,v)(r)-c_0|\le\int_0^rc_3\rho^{\alpha -1}\,d\rho=(c_3/\alpha)r^{\alpha}\le (c_3/\alpha)\ve^{\alpha}\le c_0/10\quad\forall 0<r\le\ve\notag\\
\Rightarrow\quad&\|\Phi_1(w,v)-c_0\|_{L^{\infty}\left([0, \3]\right)}\le c_0/10\quad\mbox{ and }\quad \|\Phi_1(w,v)\|_{L^{\infty}\left([0, \3]\right)}\le 11c_0/10.
\end{align}
We now choose $c_4>1$ such that
\begin{equation}\label{log-r-bd}
|\log r|\le c_4r^{-1/2}\quad\forall 0<r\le 1/2.
\end{equation}
Then by  \eqref{w-range}, \eqref{v-bd} and \eqref{log-r-bd} for any $0<r\le\ve$,
\begin{align}\label{v-w-integral-estimate1}
r\left|\int_r^{\ve}\frac{\rho^{-1}v(\rho)}{w(\rho)}\,d\rho\right|
\le&\frac{10c_3r}{9c_0}\int_r^{\ve}\rho^{\alpha-2}\,d\rho
\le\left\{\begin{aligned}
&\frac{10c_3r(r^{\alpha-1}+\ve^{\alpha-1})}{9c_0|\alpha-1|}\quad\,\mbox{ if }n \ne 4\\
&\frac{10c_3r|\log r|}{9c_0}\qquad\qquad\mbox{if }n=4
\end{aligned}\right.\notag\\
\le&\left\{\begin{aligned}
&\frac{20c_3\3^{\alpha}}{9c_0|\alpha -1|}\quad\,\mbox{ if }n \ne 4\\
&\frac{10c_3c_4\ve^{1/2}}{9c_0}\quad\mbox{ if }n=4,
\end{aligned}\right.
\end{align}
\begin{equation}\label{v-w-integral-estimate2}
r\left|\int_0^r\frac{v(\rho)}{w(\rho)}\,d\rho\right|
\le\frac{10c_3r}{9c_0}\int_0^r\rho^{\alpha-1}\,d\rho
=\frac{10c_3r^{\alpha+1}}{9c_0\alpha}\le\frac{10c_3\ve^{\alpha+1}}{9c_0\alpha}
\end{equation}
and
\begin{align}\label{v-w-integral-estimate3}
r\left|\int_r^{\ve}\frac{\rho^{-\alpha}v(\rho)^2}{w(\rho)}\,d\rho\right|
\le&\frac{10c_3^2r}{9c_0}\int_r^{\ve}\rho^{\alpha-2}\,d\rho
\le\left\{\begin{aligned}
&\frac{10c_3^2r(r^{\alpha-1}+\ve^{\alpha-1})}{9c_0|\alpha-1|}\quad\,\mbox{ if }n \ne 4\\
&\frac{10c_3^2r|\log r|}{9c_0}\qquad\qquad\mbox{if }n=4
\end{aligned}\right.\notag\\
\le&\left\{\begin{aligned}
&\frac{20c_3^2\3^{\alpha}}{9c_0|\alpha -1|}\quad\mbox{ if }n \ne 4\\
&\frac{10c_3^2c_4\ve^{1/2}}{9c_0}\quad\mbox{ if }n=4.
\end{aligned}\right.
\end{align}
Let
\begin{equation*}
c_5=\left\{\begin{aligned}
&\frac{4n(c_3+c_3^2)}{c_0|\alpha-1|}+\frac{c_3|\lambda|}{c_0\alpha}\qquad\mbox{ if }n \ne 4\\
&\frac{4nc_4(c_3+c_3^2)}{c_0}+\frac{c_3|\lambda|}{c_0\alpha}
\,\quad\mbox{ if }n=4.
\end{aligned}\right.
\end{equation*}
By \eqref{contraction-map-defn}, \eqref{log-r-bd}, \eqref{v-w-integral-estimate1}, \eqref{v-w-integral-estimate2} and  \eqref{v-w-integral-estimate3}, 
\begin{align}\label{phi2-norm-bd1}
&r^{1-\alpha}\left|\Phi_2(w,v)(r)+c_2r^{\alpha-1}\right|\notag\\
\le&|c_1|r+\frac{\alpha |\lambda|}{2}r|\log r|
+\frac{(n-1)r}{2}\left|\int_r^{\ve}\frac{\rho^{-1}v(\rho)}{w(\rho)}\,d\rho\right|+\frac{|\lambda|r}{2}\left|\int_0^r\frac{v(\rho)}{w(\rho)}\,d\rho\right|+\frac{r}{2}
\left|\int_r^{\ve}\frac{\rho^{-\alpha}v(\rho)^2}{w(\rho)}\,d\rho\right|
\notag\\
\le&|c_1|\ve+\frac{\alpha |\lambda|c_4}{2}\ve^{1/2}+c_5(\ve^{\alpha}+\ve^{1/2})\quad\forall 0<r\le\ve.
\end{align}
Let
\begin{equation*}
\ve_2=\min \left(\ve_1,\frac{c_0}{30(|c_1|+1)},\left(\frac{c_0}{30c_5}\right)^{\frac{1}{\alpha}},\frac{c_0^2}{900(\alpha|\lambda|c_4+c_5)^2}\right)
\end{equation*}
and $\ve\in (0,\ve_2)$. Then by \eqref{phi2-norm-bd1},
\begin{align}\label{phi2-bd}
&r^{1-\alpha}\left|\Phi_2(w,v)(r)+c_2r^{\alpha-1}\right|\le c_0/10\quad\forall 0<r\le\ve\notag\\
\Rightarrow\quad&\|r^{1-\alpha}(\Phi_2(w,v)(r)+c_2r^{\alpha-1})\|_{L^{\infty}\left([0, \3]\right)}\le c_0/10.
\end{align}
By \eqref{phi1-bd} and \eqref{phi2-bd}, 
\begin{equation}\label{phi-bd}
\|\Phi(w,v)-(c_0,-c_2r^{\alpha-1})\|_{\cX_\3}\le c_0/10.
\end{equation}
Hence $\Phi(\cD_{\3})\subset \cD_{\3}$. Let $(w_1,v_1), (w_2,v_2)\in \cD_{\3}$, $0<\3<\3_2$, $\delta_1=\|(w_1,v_1)-(w_2,v_2)\|_{\cX_\3}$.
Then
\begin{equation}\label{w12-range}
\frac{9c_0}{10}\le w_i(r)\le \frac{11c_0}{10}\quad\forall 0<r\le\ve, i=1,2
\end{equation} 
and
\begin{equation}\label{v12-bd}
|v_i(r)|\le c_3r^{\alpha -1}\quad\forall 0<r\le\ve, i=1,2.
\end{equation}
Now
\begin{align}\label{phi1-difference}
|\Phi_1(w_1,v_1)(r)-\Phi_1(w_2,v_2)(r)|\le&\int_0^r|v_1(\rho)-v_2(\rho)|\,d\rho\notag\\
\le&\|r^{1-\alpha}(v_1(r)-v_2(r))\|_{L^{\infty}\left([0, \3]\right)}\int_0^r\rho^{\alpha-1}\,d\rho\quad\forall 0<r\le\ve\notag\\
\le&(\delta_1/\alpha)\ve^{\alpha}\quad\forall 0<r\le\ve
\end{align}
and by \eqref{log-r-bd}, \eqref{w12-range} and \eqref{v12-bd}, for any $0<r\le\ve$,
\begin{align}\label{v-w-integral-difference-estimate1}
&r\left|\int_r^{\ve}\frac{\rho^{-1}v_1(\rho)}{w_1(\rho)}\,d\rho-\int_r^{\ve}\frac{\rho^{-1}v_2(\rho)}{w_2(\rho)}\,d\rho\right|\notag\\
\le& r\int_r^{\ve}\frac{\rho^{-1}|v_1(\rho)-v_2(\rho)|}{w_1(\rho)}\,d\rho
+r\int_r^{\ve}\rho^{-1}|v_2(\rho)|\left|\frac{1}{w_1(\rho)}-\frac{1}{w_2(\rho)}\right|\,d\rho\notag\\
\le&\frac{10\|\rho^{1-\alpha}|v_1-v_2|(\rho)\|_{L^{\infty}\left([0, \3]\right)}r}{9c_0}\int_r^{\ve}\rho^{\alpha-2}\,d\rho
+\frac{100c_3\|w_1-w_2\|_{L^{\infty}\left([0, \3]\right)}r}{81c_0^2}
\int_r^{\ve}\rho^{\alpha-2}\,d\rho
\notag\\
\le&\left\{\begin{aligned}
&\left(\frac{10}{9c_0|\alpha -1|}+\frac{100c_3}{81c_0^2|\alpha -1|}\right)\delta_1r(r^{\alpha-1}+\3^{\alpha-1})\quad\mbox{ if }n\ne 4\\
&\left(\frac{10}{9c_0}+\frac{100c_3}{81c_0^2}\right)\delta_1r|\log r|\qquad\qquad\qquad\qquad\,\mbox{ if }n=4
\end{aligned}\right.\notag\\
\le&\left\{\begin{aligned}
&\frac{c_6\delta_1\3^{\alpha}}{|\alpha -1|}\qquad\mbox{ if }n\ne 4\\
&c_4c_6\delta_1\3^{1/2}\quad\mbox{ if }n=4
\end{aligned}\right.
\end{align}
where 
\begin{equation*}
c_6=\frac{20}{9c_0}+\frac{200c_3}{81c_0^2},
\end{equation*}
\begin{align}\label{v-w-integral-difference-estimate2}
&r\left|\int_0^r\frac{v_1(\rho)}{w_1(\rho)}\,d\rho-\int_0^r\frac{v_2(\rho)}{w_2(\rho)}\,d\rho\right|\notag\\
\le& r\int_0^r\frac{|v_1(\rho)-v_2(\rho)|}{w_1(\rho)}\,d\rho
+r\int_0^r|v_2(\rho)|\left|\frac{1}{w_1(\rho)}-\frac{1}{w_2(\rho)}\right|\,d\rho\notag\\
\le&\frac{10\|\rho^{1-\alpha}|v_1-v_2|(\rho)\|_{L^{\infty}\left([0, \3]\right)}r}{9c_0}\int_0^r\rho^{\alpha-1}\,d\rho
+\frac{100c_3\|w_1-w_2\|_{L^{\infty}\left([0, \3]\right)}r}{81c_0^2}
\int_0^r\rho^{\alpha-1}\,d\rho\notag\\
=&\left(\frac{10}{9c_0\alpha}+\frac{100c_3}{81c_0^2\alpha}\right)\delta_1r^{\alpha+1}
\le  \frac{c_6\delta_1\3^{\alpha+1}}{\alpha}
\end{align}
and
\begin{align}\label{v-w-integral-difference-estimate3}
&r\left|\int_r^{\ve}\frac{\rho^{-\alpha}v_1(\rho)^2}{w_1(\rho)}\,d\rho-\int_r^{\ve}\frac{\rho^{-\alpha}v_2(\rho)^2}{w_2(\rho)}\,d\rho\right|\notag\\
\le& r\int_r^{\ve}\frac{\rho^{-\alpha}|v_1(\rho)-v_2(\rho)|(|v_1(\rho)|+|v_2(\rho)|)}{w_1(\rho)}\,d\rho
+r\int_r^{\ve}\rho^{-\alpha}|v_2(\rho)|^2\left|\frac{1}{w_1(\rho)}-\frac{1}{w_2(\rho)}\right|\,d\rho\notag\\
\le&\frac{20c_3\|\rho^{1-\alpha}|v_1-v_2|(\rho)\|_{L^{\infty}\left([0, \3]\right)}r}{9c_0}\int_r^{\ve}\rho^{\alpha-2}\,d\rho
+\frac{100c_3^2\|w_1-w_2\|_{L^{\infty}\left([0, \3]\right)}r}{81c_0^2}
\int_r^{\ve}\rho^{\alpha-2}\,d\rho
\notag\\
\le&\left\{\begin{aligned}
&\left(\frac{20c_3}{9c_0|\alpha -1|}+\frac{100c_3^2}{81c_0^2|\alpha -1|}\right)\delta_1r(r^{\alpha-1}+\3^{\alpha-1})\quad\mbox{ if }n\ne 4\\
&\left(\frac{20c_3}{9c_0}+\frac{100c_3^2}{81c_0^2}\right)\delta_1r|\log r|\qquad\qquad\qquad\qquad\,\mbox{if }n=4
\end{aligned}\right.\notag\\
\le&\left\{\begin{aligned}
&\frac{2c_3c_6\delta_1\3^{\alpha}}{|\alpha -1|}\qquad\,\,\mbox{ if }n\ne 4\\
&2c_3c_4c_6\delta_1\3^{1/2}\quad\mbox{ if }n=4.
\end{aligned}\right.
\end{align}
By \eqref{contraction-map-defn}, \eqref{v-w-integral-difference-estimate1}, \eqref{v-w-integral-difference-estimate2} and \eqref{v-w-integral-difference-estimate3},
\begin{equation}\label{phi2-difference}
r^{1-\alpha}|\Phi_2(w_1,v_1)(r)-\Phi_2(w_2,v_2)(r)|\le c_7\delta_1(\3^{\alpha}+\ve^{1/2})\quad\forall 0<r\le\ve
\end{equation}
where 
\begin{equation*}
c_7=\left\{\begin{aligned}
&c_6\left(\frac{n(1+c_3)}{|\alpha -1|}+\frac{|\lambda|}{\alpha}\right)\quad\,\,\,\mbox{ if }n \ne 4\\
&c_6\left(nc_4(1+c_3)+\frac{|\lambda|}{\alpha}\right)\quad\mbox{ if }n=4.
\end{aligned}\right.
\end{equation*}
Let
\begin{equation*}
\ve_3=\min\left(\ve_2,(\alpha/6)^{1/\alpha},(6c_7)^{-1/\alpha}, (6c_7)^{-2}\right)
\end{equation*}
and $0<\3<\ve_3$.
By \eqref{phi1-difference} and \eqref{phi2-difference},
\begin{equation}\label{phi-difference}
\|\Phi(w_1,v_1)-\Phi(w_2,v_2)\|_{\cX_\3}\le\delta_1/2.
\end{equation}
Hence $\Phi$ is a contraction map on  $\cD_{\3}$. Therefore by the contraction map theorem there exists a unique fixed point $(w,v)=\Phi(w,v)$ in $\cD_{\3}$. Thus
\begin{equation}\label{w-v-eqn}
\left\{\begin{aligned}
w(r)=&c_0+\int_0^r v(\rho)\,d\rho,\\
v(r)=&-c_2r^{\alpha-1}+c_1r^{\alpha}+\frac{\alpha\lambda}{2}r^{\alpha}\log r
+r^{\alpha}\left\{\frac{(n-1)}{2}\int_r^{\ve}\frac{\rho^{-1}v(\rho)}{w(\rho)}\,d\rho
-\frac{\lambda}{2}\int_0^r\frac{v(\rho)}{w(\rho)}\,d\rho\right.\\
&\quad \left.-\frac{1}{2}
\int_r^{\ve}\frac{\rho^{-\alpha}v(\rho)^2}{w(\rho)}\,d\rho\right\}.
\end{aligned}\right.
\end{equation}
By \eqref{w-v-eqn} $v(r)=w_r(r)$ for any $0<r\le\3$ and $w\in C^2((0,\3];\Rr^+)\cap C([0,\ve],\Rr^+)$ satisfies \eqref{w0=c0}  and \eqref{w-eqn=w-integral-eqn}.  Hence $w$ satisfies \eqref{wrr-eqn}.
By \eqref{w-eqn=w-integral-eqn}, \eqref{v-w-integral-estimate1}, \eqref{v-w-integral-estimate2}, and \eqref{v-w-integral-estimate3}, we get \eqref{wr-initial-value} and the proposition follows.
\end{proof}

By an argument similar to the proof of Proposition \ref{w-local-existence-prop} we have the following result.

\begin{prop}\label{w-local-existence-prop2}
Let $n\in\Z^+$, $n>4$, $\alpha=\sqrt{n}-1$, $\lambda, c_1\in\Rr$, $c_0>0$  and let $c_2$ be given by \eqref{c2-defn}.  Then there exists a constant $0<\3<1$ such that \eqref{wrr-eqn} has a unique solution $w\in C^2((0,\3];\Rr^+)\cap C([0,\3];\Rr^+)$ in $(0,\3]$ which satisfies   \eqref{w0=c0} and 
\begin{align}\label{w-eqn=w-integral-eqn2}
w_r(r)=&-c_2r^{\alpha-1}+c_1r^{\alpha}+\frac{\alpha\lambda}{2}r^{\alpha}\log r
+r^{\alpha}\left\{-\frac{(n-1)}{2}\int_0^r\frac{\rho^{-1}w_r(\rho)}{w(\rho)}\,d\rho-\frac{\lambda}{2}\int_0^r\frac{w_r(\rho)}{w(\rho)}\,d\rho\right.\notag\\
&\quad \left.+\frac{1}{2}
\int_0^r\frac{\rho^{-\alpha}w_r(\rho)^2}{w(\rho)}\,d\rho\right\}\qquad\qquad\forall 0<r\le\3.
\end{align}
Moreover \eqref{wr-initial-value} holds.
\end{prop}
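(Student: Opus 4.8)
The plan is to rerun the contraction-mapping scheme of Proposition \ref{w-local-existence-prop}, the only essential change being that for $n>4$ one has $\alpha=\sqrt n-1>1$, so that $\alpha-2>-1$ and the integrals $\int_0^r\rho^{-1}w_r(\rho)/w(\rho)\,d\rho$ and $\int_0^r\rho^{-\alpha}w_r(\rho)^2/w(\rho)\,d\rho$ now converge at the origin: near $0$ one has $w\to c_0>0$ and $w_r(\rho)\sim-c_2\rho^{\alpha-1}$, so the integrands behave like $\rho^{\alpha-2}$. This is why \eqref{w-eqn=w-integral-eqn2}, with all three integrals taken from $0$, replaces the formulation \eqref{w-eqn=w-integral-eqn} in this range. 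The derivation of \eqref{w-eqn=w-integral-eqn2} parallels that of \eqref{w-eqn=w-integral-eqn}: from \eqref{wrr-eqn2},
\[
\Bigl(r^{-\alpha}w_r(r)+c_2r^{-1}-\tfrac{\alpha\lambda}{2}\log r\Bigr)_r=-\tfrac{n-1}{2}\,\tfrac{r^{-1}w_r(r)}{w(r)}-\tfrac{\lambda}{2}\,\tfrac{w_r(r)}{w(r)}+\tfrac12\,\tfrac{r^{-\alpha}w_r(r)^2}{w(r)},
\]
the left side tends to a finite limit $c_1$ as $r\to0^+$, and integrating from $0$ and multiplying by $r^{\alpha}$ gives \eqref{w-eqn=w-integral-eqn2}. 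Hence it suffices to find a fixed point of the map built from the right-hand side of \eqref{w-eqn=w-integral-eqn2}.

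I would take the same Banach space $\cX_\ve$, the same set $\cD_\ve$ of \eqref{closed-set-defn}, the same $\Phi_1$ of \eqref{contraction-map-defn}, and replace $\Phi_2$ by
\begin{align*}
\Phi_2(w,v)(r)=&-c_2r^{\alpha-1}+c_1r^{\alpha}+\frac{\alpha\lambda}{2}r^{\alpha}\log r\\
&+r^{\alpha}\Bigl\{-\frac{n-1}{2}\int_0^r\frac{\rho^{-1}v(\rho)}{w(\rho)}\,d\rho-\frac{\lambda}{2}\int_0^r\frac{v(\rho)}{w(\rho)}\,d\rho+\frac{1}{2}\int_0^r\frac{\rho^{-\alpha}v(\rho)^2}{w(\rho)}\,d\rho\Bigr\}.
\end{align*}
On $\cD_\ve$ one still has $9c_0/10\le w\le 11c_0/10$ and $|v(r)|\le c_3r^{\alpha-1}$ with $c_3=|c_2|+c_0/10$; the analogues of \eqref{v-w-integral-estimate1}--\eqref{v-w-integral-estimate3} now use $\int_0^r\rho^{\alpha-2}\,d\rho=r^{\alpha-1}/(\alpha-1)$ (valid since $\alpha>1$) and $\int_0^r\rho^{\alpha-1}\,d\rho=r^{\alpha}/\alpha$, giving $r\int_0^r\rho^{-1}|v|/w\,d\rho\le\frac{10c_3}{9c_0(\alpha-1)}r^{\alpha}$, $r\int_0^r\rho^{-\alpha}|v|^2/w\,d\rho\le\frac{10c_3^2}{9c_0(\alpha-1)}r^{\alpha}$ and $r\int_0^r|v|/w\,d\rho\le\frac{10c_3}{9c_0\alpha}r^{\alpha+1}$; together with $|c_1|r+\frac{\alpha|\lambda|}{2}r|\log r|\le|c_1|\ve+\frac{\alpha|\lambda|c_4}{2}\ve^{1/2}$ from \eqref{log-r-bd}, and noting that every power of $\ve$ that appears is positive (here $\alpha>1$ is used), one gets $r^{1-\alpha}|\Phi_2(w,v)(r)+c_2r^{\alpha-1}|\le c_0/10$ for $\ve$ small, so together with the unchanged bound on $\Phi_1-c_0$ one has $\Phi(\cD_\ve)\subset\cD_\ve$.

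For the contraction, given $(w_1,v_1),(w_2,v_2)\in\cD_\ve$ with $\delta_1=\|(w_1,v_1)-(w_2,v_2)\|_{\cX_\ve}$, I would estimate the differences of the three integral terms exactly as in \eqref{v-w-integral-difference-estimate1}--\eqref{v-w-integral-difference-estimate3}, writing $\frac{v_1}{w_1}-\frac{v_2}{w_2}=\frac{v_1-v_2}{w_1}+v_2\bigl(\frac1{w_1}-\frac1{w_2}\bigr)$, using $\bigl|\frac1{w_1}-\frac1{w_2}\bigr|\le\frac{100}{81c_0^2}\|w_1-w_2\|_{\infty}$ and $\int_0^r\rho^{\alpha-2}\,d\rho=r^{\alpha-1}/(\alpha-1)$; this gives $\|\Phi(w_1,v_1)-\Phi(w_2,v_2)\|_{\cX_\ve}\le C\delta_1(\ve^{\alpha}+\ve^{\alpha+1})\le\delta_1/2$ for $\ve$ small. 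By the contraction mapping theorem $\Phi$ has a unique fixed point $(w,v)$ in $\cD_\ve$; the first component gives $v=w_r$, differentiating the second gives that $w\in C^2((0,\ve];\Rr^+)\cap C([0,\ve];\Rr^+)$ solves \eqref{wrr-eqn} and satisfies \eqref{w0=c0} and \eqref{w-eqn=w-integral-eqn2}, and letting $r\to0^+$ in $r^{1-\alpha}w_r(r)=-c_2+c_1r+\frac{\alpha\lambda}{2}r\log r+o(1)$ (the last term controlled by the integral estimates above) yields \eqref{wr-initial-value}.

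I expect the only genuinely new point — everything else being a rerun of the constant bookkeeping in the proof of Proposition \ref{w-local-existence-prop} — is the first step: that for $n>4$ one may integrate the differential identity from $0$, i.e. that $r^{-\alpha}w_r(r)+c_2r^{-1}-\frac{\alpha\lambda}{2}\log r$ has a limit at $0$ and the three quotient integrals converge there. This is precisely what $\alpha>1$ provides and what fails when $n\le4$ (there $\alpha\le1$, $\int_0^r\rho^{\alpha-2}\,d\rho$ diverges, and one must integrate over $[r,\ve]$ instead, as in \eqref{w-eqn=w-integral-eqn}).
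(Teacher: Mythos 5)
Your proposal is correct and follows essentially the same route as the paper, which proves this proposition by the remark that the contraction-mapping argument of Proposition \ref{w-local-existence-prop} carries over with $\Phi_2$ redefined via the integrals $\int_0^r$ as in \eqref{w-eqn=w-integral-eqn2}; your observation that $\alpha=\sqrt{n}-1>1$ makes $\int_0^r\rho^{\alpha-2}\,d\rho$ finite is exactly the point that justifies this reformulation for $n>4$. The bookkeeping you outline (invariance of $\cD_\ve$, the contraction estimate, and passing to the limit $r\to0^+$ to obtain \eqref{wr-initial-value}) matches the paper's scheme.
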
 

\begin{cor}\label{h-local-existence-cor1}
Let $2\le n\in\Z^+$, $\alpha=\sqrt{n}-1$, $\lambda, c_1\in\Rr$, $c_0>0$  and let $c_2$ be given by \eqref{c2-defn}.  Then there exists a constant $0<\3<1$ such that \eqref{h-eqn} has infinitely many solutions $h\in C^2((0,\3])$ in $(0,\3]$ which satisfies \eqref{h-origin-blow-up-rate}. Moreover \eqref{h-eqn} has a unique solution $h\in C^2((0,\3])$ in $(0,\3]$ which satisfies \eqref{h-origin-blow-up-rate} and \eqref{w-eqn=w-integral-eqn} with  $w$ being given by \eqref{w-defn}. Moreover \eqref{wr-initial-value} holds.
\end{cor}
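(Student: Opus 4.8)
The plan is to deduce the corollary from Proposition \ref{w-local-existence-prop} through the substitution $w(r)=r^{\alpha}h(r)$ of \eqref{w-defn}, $\alpha=\sqrt n-1$. Recall the observation made just before Proposition \ref{w-local-existence-prop}: a function $h\in C^2((0,\3];\Rr^+)$ solves \eqref{h-eqn} in $(0,\3]$ and satisfies \eqref{h-origin-blow-up-rate} with a given constant $c_0>0$ if and only if $w:=r^{\alpha}h$ lies in $C^2((0,\3];\Rr^+)\cap C([0,\3];\Rr^+)$, solves \eqref{wrr-eqn} in $(0,\3]$ and satisfies \eqref{w0=c0}; here the positivity of $w$ on $(0,\3]$ is immediate from $h>0$ and $w(0)=c_0$ is exactly \eqref{h-origin-blow-up-rate}, and the additional requirement in the corollary, that $r^{\alpha}h$ satisfy \eqref{w-eqn=w-integral-eqn}, is simply the requirement that $w$ satisfy \eqref{w-eqn=w-integral-eqn}. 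Consequently, if $w$ is the solution produced by Proposition \ref{w-local-existence-prop} on some interval $(0,\3]$, then $h:=r^{-\alpha}w\in C^2((0,\3])$ solves \eqref{h-eqn}, satisfies \eqref{h-origin-blow-up-rate} and \eqref{w-eqn=w-integral-eqn}, and \eqref{wr-initial-value} holds; conversely, any $h\in C^2((0,\3])$ that solves \eqref{h-eqn}, satisfies \eqref{h-origin-blow-up-rate} with this $c_0$, and satisfies \eqref{w-eqn=w-integral-eqn} with the same constant $c_1$ and the same $\3$ gives, via $r^{\alpha}h$, a solution of \eqref{wrr-eqn} in $C^2((0,\3];\Rr^+)\cap C([0,\3];\Rr^+)$ satisfying \eqref{w0=c0} and \eqref{w-eqn=w-integral-eqn}, which by the uniqueness assertion of Proposition \ref{w-local-existence-prop} must coincide with the above $w$, so that $h$ is unique. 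This yields the uniqueness part of the corollary together with \eqref{wr-initial-value}.

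For the existence of infinitely many solutions I would let $c_1$ range over a bounded interval, say $|c_1|\le1$. Inspecting the proof of Proposition \ref{w-local-existence-prop}, the radius $\3$ constructed there is a minimum of finitely many positive quantities, each of which is either independent of $c_1$ or nonincreasing in $|c_1|$ (the only $c_1$-dependence enters through a single factor of the form $\frac{c_0}{30(|c_1|+1)}$), and all the contraction estimates only improve when $\3$ is decreased; hence there is one $\3>0$, depending only on $n,\lambda,c_0$, for which the contraction argument goes through for \emph{every} $c_1$ with $|c_1|\le1$. For each such $c_1$ one obtains a solution $w_{c_1}\in C^2((0,\3];\Rr^+)\cap C([0,\3];\Rr^+)$ of \eqref{wrr-eqn} satisfying \eqref{w0=c0} and \eqref{w-eqn=w-integral-eqn} with constant $c_1$ and upper limit $\3$, and then $h_{c_1}:=r^{-\alpha}w_{c_1}\in C^2((0,\3])$ solves \eqref{h-eqn} in $(0,\3]$ and satisfies \eqref{h-origin-blow-up-rate}. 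These solutions are pairwise distinct: if $h_{c_1}\equiv h_{c_1'}$ on $(0,\3]$, then $w_{c_1}\equiv w_{c_1'}$ on $(0,\3]$, and since every term on the right-hand side of \eqref{w-eqn=w-integral-eqn} other than $c_1r^{\alpha}$ is determined by $w$ and $r$ alone ($c_2,\lambda,\3$ being fixed), subtracting \eqref{w-eqn=w-integral-eqn} for the two solutions forces $c_1r^{\alpha}\equiv c_1'r^{\alpha}$ on $(0,\3]$, hence $c_1=c_1'$. Thus $\{h_{c_1}:|c_1|\le1\}$ is an infinite family of solutions of \eqref{h-eqn} in $(0,\3]$ satisfying \eqref{h-origin-blow-up-rate}, and together with the previous paragraph this proves the corollary.

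The one delicate point is the distinctness argument, and in particular the necessity of working on a single fixed interval $(0,\3]$. A given solution of \eqref{wrr-eqn} satisfies \eqref{w-eqn=w-integral-eqn} with a \emph{different} value of the constant when the upper limit of integration is changed, so $c_1$ cannot be recovered from the solution alone without also prescribing $\3$; moreover, for $2\le n\le4$ the $c_1$-term in $w_r$ sits at a higher order than the first nonlinear correction (compare the expansions in Theorem \ref{h-asymptotic-behaviour-thm2}), so it is invisible to the leading-order asymptotics near the origin as well. Fixing one common $\3$ for the whole bounded family of $c_1$'s removes both of these obstructions; the remaining steps — the passage back and forth between $h$ and $w$, and the uniformity of $\3$ over $|c_1|\le1$ — are routine once the proof of Proposition \ref{w-local-existence-prop} is available.
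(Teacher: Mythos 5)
Your proof is correct and follows essentially the same route the paper intends: the corollary is deduced from Proposition \ref{w-local-existence-prop} via the correspondence $w=r^{\alpha}h$ of \eqref{w-defn}, with the parameter $c_1$ in \eqref{w-eqn=w-integral-eqn} generating the infinite family and the fixed-point uniqueness giving the uniqueness statement and \eqref{wr-initial-value}. Your additional care about choosing one $\ve$ uniformly for $|c_1|\le 1$ and recovering $c_1$ from the integral identity on that common interval correctly fills in details the paper leaves implicit.
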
 

\begin{cor}\label{h-local-existence-cor2}
Let $n\in\Z^+$, $n>4$, $\alpha=\sqrt{n}-1$, $\lambda, c_1\in\Rr$, $c_0>0$  and let $c_2$ be given by \eqref{c2-defn}.    Then  there exists a constant $0<\3<1$ such that \eqref{h-eqn} has a unique solution $h\in C^2((0,\3])$ in $(0,\3]$ which satisfies \eqref{h-origin-blow-up-rate} and \eqref{w-eqn=w-integral-eqn2} with  $w$ being given by \eqref{w-defn}. Moreover \eqref{wr-initial-value} holds.

\end{cor}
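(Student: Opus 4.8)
The plan is to read this corollary off Proposition \ref{w-local-existence-prop2} via the substitution $w=r^{\alpha}h$, exactly as Corollary \ref{h-local-existence-cor1} follows from Proposition \ref{w-local-existence-prop}. Recall from the discussion preceding \eqref{wrr-eqn} that for $r>0$ and $w>0$ the chain \eqref{h-eqn} $\Leftrightarrow$ \eqref{w-eqn2} $\Leftrightarrow$ \eqref{wrr-eqn} is reversible; hence $h\in C^2((0,\ve];\Rr^+)$ solves \eqref{h-eqn} in $(0,\ve]$ and satisfies \eqref{h-origin-blow-up-rate} if and only if $w(r):=r^{\alpha}h(r)$ lies in $C^2((0,\ve];\Rr^+)\cap C([0,\ve];\Rr^+)$, solves \eqref{wrr-eqn} in $(0,\ve]$, and satisfies \eqref{w0=c0}. (The fact that for $n>4$ the pertinent integral form is \eqref{w-eqn=w-integral-eqn2}, with all integrals running from $0$, reflects $\alpha=\sqrt n-1>1$, which renders the integrands there $O(\rho^{\alpha-2})$ and hence integrable at the origin.)

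For existence I would apply Proposition \ref{w-local-existence-prop2} to get $\ve\in(0,1)$ and the unique $w\in C^2((0,\ve];\Rr^+)\cap C([0,\ve];\Rr^+)$ solving \eqref{wrr-eqn} in $(0,\ve]$ with $w(0)=c_0$, obeying \eqref{w-eqn=w-integral-eqn2}, and satisfying \eqref{wr-initial-value}; then set $h(r):=r^{-\alpha}w(r)$ on $(0,\ve]$. By the reversibility above $h>0$, $h\in C^2((0,\ve])$ solves \eqref{h-eqn} in $(0,\ve]$, while $r^{\alpha}h(r)=w(r)\to c_0$ gives \eqref{h-origin-blow-up-rate}, and \eqref{w-eqn=w-integral-eqn2} and \eqref{wr-initial-value} hold by construction.

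For uniqueness I would let $h_1,h_2\in C^2((0,\ve])$ both solve \eqref{h-eqn} in $(0,\ve]$, satisfy \eqref{h-origin-blow-up-rate} with the same $c_0$, and satisfy \eqref{w-eqn=w-integral-eqn2} with $w_i:=r^{\alpha}h_i$. Each $w_i$ then solves \eqref{wrr-eqn} with $w_i(0)=c_0$ and the integral identity \eqref{w-eqn=w-integral-eqn2}, i.e.\ is a fixed point of the contraction map $\Phi$ from the proof of Proposition \ref{w-local-existence-prop2}. The only delicate point is to verify that, after replacing $\ve$ by a sufficiently small $\ve'>0$, the pairs $(w_i,w_{i,r})$ lie in the ball $\cD_{\ve'}$ of \eqref{closed-set-defn}: one has $\|w_i-c_0\|_{L^\infty([0,\ve'])}\le c_0/10$ for $\ve'$ small since $w_i\in C([0,\ve'])$ with $w_i(0)=c_0$, and from \eqref{w-eqn=w-integral-eqn2} a routine bootstrap on $\sup_{0<r\le\delta}r^{1-\alpha}|w_{i,r}(r)|$ yields $r^{1-\alpha}w_{i,r}(r)+c_2\to0$ as $r\to0^+$, so $\|r^{1-\alpha}(w_{i,r}+c_2r^{\alpha-1})\|_{L^\infty([0,\ve'])}\le c_0/10$ for $\ve'$ small as well. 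Uniqueness of the fixed point of $\Phi$ in $\cD_{\ve'}$ forces $w_1=w_2$ on $(0,\ve']$, hence $h_1=h_2$ there; since \eqref{h-eqn} is, for $r>0$ and $h>0$, a regular second-order ODE (solving for $h_{rr}$ yields a right-hand side locally Lipschitz in $(h,h_r)$), standard ODE uniqueness propagates $h_1\equiv h_2$ to all of $(0,\ve]$. I expect this membership $(w_i,w_{i,r})\in\cD_{\ve'}$ to be the sole non-routine step; the remainder is a mechanical transcription through $w=r^{\alpha}h$ of results already in hand.
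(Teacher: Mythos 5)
Your proposal is correct and follows essentially the same route as the paper: the corollary is read off Proposition \ref{w-local-existence-prop2} through the substitution $w=r^{\alpha}h$ and the equivalence of \eqref{h-eqn}, \eqref{h-origin-blow-up-rate} with \eqref{wrr-eqn}, \eqref{w0=c0} noted at the start of Section 2, exactly as Corollary \ref{h-local-existence-cor1} is deduced from Proposition \ref{w-local-existence-prop}. The ball-membership step you single out as delicate is in fact already subsumed in the uniqueness assertion of Proposition \ref{w-local-existence-prop2} (and note that \eqref{w-eqn=w-integral-eqn2} itself gives $r^{1-\alpha}w_{i,r}(r)\to -c_2$ directly, since for $\alpha>1$ the bracketed integrals over $(0,r)$ vanish as $r\to 0^+$), so your argument is, if anything, more detailed than what the paper intends.
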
 

\begin{prop}\label{h-asymptotic-behaviour-near-0-prop1}
Let $n\in\Z^+$, $n>4$, $\alpha=\sqrt{n}-1$, $\lambda, c_1\in\Rr$, $c_0>0$  and let $c_2$ be given by \eqref{c2-defn}.   Then there exists a constant $0<\3<1$ such that \eqref{h-eqn} has a unique solution $h\in C^2((0,\3])$ in $(0,\3]$ which satisfies \eqref{h-origin-blow-up-rate} and \eqref{h-asymptotic-near-origin1} for some constant $0<\delta_0<\ve$.
Moreover \eqref{hr-asymptotic-near-origin1} and \eqref{wr-initial-value} holds with  $w$ being given by \eqref{w-defn}.
\end{prop}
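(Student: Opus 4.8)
The plan is to bootstrap from the crude estimate already available. By Corollary~\ref{h-local-existence-cor2} (equivalently Proposition~\ref{w-local-existence-prop2}) we already have, for $n>4$, a unique solution $w\in C^2((0,\ve];\Rr^+)\cap C([0,\ve];\Rr^+)$ of \eqref{wrr-eqn} satisfying \eqref{w0=c0}, \eqref{w-eqn=w-integral-eqn2} and \eqref{wr-initial-value}, and it suffices to translate the refined asymptotics of $w$ near $r=0$ into the asymptotics \eqref{h-asymptotic-near-origin1}, \eqref{hr-asymptotic-near-origin1} via $h(r)=r^{-\alpha}w(r)$ and $h_r(r)=r^{-\alpha}w_r(r)-\alpha r^{-\alpha-1}w(r)$. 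So the whole content is: upgrade the zeroth-order statement $w(r)\to c_0$, $r^{1-\alpha}w_r(r)\to -c_2$ to the precise two-term (in fact effectively three-term, counting the $r^{2\alpha}$ correction) expansion of $w$ and $w_r$.

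First I would feed the known bounds $\tfrac{9c_0}{10}\le w\le\tfrac{11c_0}{10}$ and $|w_r(r)|\le c_3 r^{\alpha-1}$ back into the three integral terms on the right of \eqref{w-eqn=w-integral-eqn2}. Because $n>4$ gives $\alpha=\sqrt n-1>1$, all three integrals $\int_0^r \rho^{-1}w_r/w\,d\rho$, $\int_0^r w_r/w\,d\rho$, $\int_0^r \rho^{-\alpha}w_r(\rho)^2/w\,d\rho$ converge at $0$ and are $O(r^{\alpha-1})$, $O(r^{\alpha})$, $O(r^{\alpha-1})$ respectively; but more precisely, substituting $w(\rho)=c_0+o(1)$ and $w_r(\rho)=-c_2\rho^{\alpha-1}(1+o(1))$ gives
\[
\frac{(n-1)}{2}\int_0^r\frac{\rho^{-1}w_r(\rho)}{w(\rho)}\,d\rho=-\frac{(n-1)c_2}{2c_0(\alpha-1)}r^{\alpha-1}(1+o(1)),\qquad
\frac12\int_0^r\frac{\rho^{-\alpha}w_r(\rho)^2}{w(\rho)}\,d\rho=\frac{c_2^2}{2c_0(\alpha-1)}r^{\alpha-1}(1+o(1)),
\]
while the $\lambda$-integral is $O(r^\alpha)$ and hence absorbed into lower-order terms relative to $r^{\alpha-1}$. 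Multiplying the brace by $r^\alpha$ produces, after collecting,
\[
w_r(r)=-c_2 r^{\alpha-1}+c_1 r^\alpha+\frac{\alpha\lambda}{2}r^\alpha\log r+\frac{c_2^2+(n-1)c_2}{2c_0(\alpha-1)}r^{2\alpha-1}+o(1)r^{2\alpha-1},
\]
which is exactly \eqref{hr-asymptotic-near-origin1} once multiplied by $r^{-\alpha}$ and shifted by $-\alpha r^{-1}w(r)$; note the $\lambda r^\alpha/((\alpha+1)^2)$-type terms and the $c_1/(\alpha+1)$ constant will appear after the integration in the next step. Then I would integrate this expansion of $w_r$ from $0$ to $r$ (the $r^{\alpha-1}$ term integrates to $r^\alpha/\alpha$, the $r^\alpha$ term to $r^{\alpha+1}/(\alpha+1)$, the $r^\alpha\log r$ term to $\tfrac{r^{\alpha+1}}{\alpha+1}\log r-\tfrac{r^{\alpha+1}}{(\alpha+1)^2}$ by parts — this is the source of the $-\tfrac{\alpha\lambda}{2(\alpha+1)^2}$ term in \eqref{h-asymptotic-near-origin1} — and the $r^{2\alpha-1}$ term to $r^{2\alpha}/(2\alpha)$), add $c_0$, and divide by $r^\alpha$ to get \eqref{h-asymptotic-near-origin1}.

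To make the $o(1)$ terms genuinely $o(1)$ rather than merely bounded, I would iterate once more: plug the two-term expansion of $w,w_r$ just obtained back into the integrands, using $1/w(\rho)=\tfrac1{c_0}(1-\tfrac{w(\rho)-c_0}{c_0}+\dots)$ and expanding $w_r(\rho)^2$, so that the error contributions are one power of $r$ (or one factor $\log r$) smaller than the main $r^{2\alpha-1}$ term; this is the standard two-step bootstrap and is where the constant $\delta_0$ gets chosen, small enough that all the remainder integrals are controlled. Finally I would state uniqueness: it is inherited directly from the uniqueness of the fixed point in Proposition~\ref{w-local-existence-prop2}, so no new argument is needed there. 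The main obstacle is purely bookkeeping — keeping track of which terms are $O(r^{2\alpha-1})$ versus strictly smaller, and correctly handling the logarithmic term through the integration by parts, since for $n$ near $4$ the exponents $\alpha-1$, $\alpha$, $2\alpha-1$ are close together and one must be careful that $2\alpha-1<\alpha$ fails (it does not: $2\alpha-1>\alpha\iff\alpha>1\iff n>4$), so the ordering $r^{\alpha-1}\gg r^\alpha\gg r^\alpha\log r^{-1}\gg\dots\gg r^{2\alpha-1}$ used in the expansion is valid precisely in the regime $n>4$ assumed here; the borderline cases $n\in\{2,3,4\}$ are exactly why Theorem~\ref{h-asymptotic-behaviour-thm2} is stated separately.
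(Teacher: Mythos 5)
Your derivation of \eqref{h-asymptotic-near-origin1} and \eqref{hr-asymptotic-near-origin1} is essentially the paper's argument: both start from the solution given by Corollary \ref{h-local-existence-cor2}, feed the zeroth-order information $w\to c_0$, $r^{1-\alpha}w_r\to -c_2$ from \eqref{w0=c0} and \eqref{wr-initial-value} back into the three integrals of \eqref{w-eqn=w-integral-eqn2} (the paper does this with $\pm\delta_1$ bounds on $r^{1-\alpha}w_r/w$ and $(r^{1-\alpha}w_r)^2/w$, with $\delta_1$ arbitrarily small, which plays the role of your second bootstrap pass and yields the $o(1)r^{2\alpha-1}$ error directly), then integrate and convert via $w_r=\alpha r^{\alpha-1}h+r^{\alpha}h_r$. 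That part is correct, modulo the harmless slip in your ordering remark ($r^{\alpha}\log(1/r)\gg r^{\alpha}$ as $r\to0^+$, not the reverse; irrelevant here since both terms are kept explicitly).

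The genuine gap is the uniqueness claim. The proposition asserts uniqueness within the class of solutions of \eqref{h-eqn} satisfying \eqref{h-origin-blow-up-rate} and the expansion \eqref{h-asymptotic-near-origin1}, whereas Proposition \ref{w-local-existence-prop2} only gives uniqueness within the class of solutions of the integral equation \eqref{w-eqn=w-integral-eqn2} with the prescribed constant $c_1$. Saying uniqueness is ``inherited directly'' skips the bridge between these two classes, and that bridge is the entire second half of the paper's proof: given a competitor $h_1$ satisfying \eqref{h-origin-blow-up-rate} and \eqref{h-asymptotic-near-origin1}, one sets $w_1=r^{\alpha}h_1$, observes that $w_1$ solves \eqref{wrr-eqn2}, integrates over $(0,r)$ (using $\alpha>1$ for convergence) to obtain a representation of the form \eqref{w-eqn=w-integral-eqn2} with some a priori different constant $c_1'$, reruns the asymptotic computation with $c_1'$, and compares the resulting $r^{\alpha+1}$ coefficient with the assumed expansion to force $c_1'=c_1$; only then does Proposition \ref{w-local-existence-prop2} apply to give $w_1\equiv w$, hence $h_1\equiv h$. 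Without this identification of $c_1'$ with $c_1$ -- that is, without showing that the stated asymptotics determine which integral equation the competitor satisfies -- your argument does not rule out a second solution coming from a different constant in the integrated equation, so the uniqueness assertion of the proposition is not established as written.
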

\begin{proof}
Since $n>4$, $\alpha>1$ and $c_2>0$.  Let $w$ be given by \eqref{w-defn}.
By Corollary \ref{h-local-existence-cor2} there exists a constant $0<\3<1$ such that \eqref{h-eqn} has a unique solution $h\in C^2((0,\3])$ in $(0,\3]$ which satisfies \eqref{h-origin-blow-up-rate}, \eqref{wr-initial-value}  and \eqref{w-eqn=w-integral-eqn2}.  Let
\begin{equation}\label{delta1-defn}
0<\delta_1<\min\left(\3,\frac{|c_2|}{2c_0},\frac{c_2^2}{2c_0}\right).
\end{equation}
By \eqref{h-origin-blow-up-rate} and \eqref{wr-initial-value}  there exist constants $\delta_0\in (0,\ve)$ and $c_8>0$ such that
\begin{equation}\label{wr-w-ratio-upper-lower-bds1}
-\frac{c_2}{c_0}-\delta_1\le \frac{r^{1-\alpha}w_r(r)}{w(r)}
\le -\frac{c_2}{c_0}+\delta_1\quad\forall 0<r\le\delta_0
\end{equation}
and
\begin{equation}\label{wr-w-ratio-upper-lower-bds2}
\frac{c_2^2}{c_0}-\delta_1\le \frac{(r^{1-\alpha}w_r(r))^2}{w(r)}
\le \frac{c_2^2}{c_0}+\delta_1\quad\forall 0<r\le\delta_0
\end{equation}
holds.
Then by \eqref{wr-w-ratio-upper-lower-bds1} and \eqref{wr-w-ratio-upper-lower-bds2},
\begin{align}\label{integrals-bd2}
&-\frac{(n-1)}{2}\int_0^r\frac{\rho^{-1}w_r(\rho)}{w(\rho)}\,d\rho-\frac{\lambda}{2}\int_0^r\frac{w_r(\rho)}{w(\rho)}\,d\rho+\frac{1}{2}\int_0^r\frac{\rho^{-\alpha}w_r(\rho)^2}{w(\rho)}\,d\rho\notag\\
\le&\frac{(n-1)}{2}\left(\frac{c_2}{c_0}+\delta_1\right)\int_0^r\rho^{\alpha-2}\,d\rho
+\frac{\lambda}{2}\left(\frac{c_2}{c_0}+\mbox{sign}\,(\lambda)\delta_1\right)\int_0^r\rho^{\alpha-1}\,d\rho+\frac{1}{2}\left(\frac{c_2^2}{c_0}+\delta_1\right)\int_0^r\rho^{\alpha-2}\,d\rho
\notag\\
\le& \frac{c_2(n-1+c_2)}{2c_0(\alpha -1)}r^{\alpha -1}+\frac{\lambda c_2}{2c_0\alpha}r^{\alpha}+\frac{n\delta_1}{2(\alpha -1)}r^{\alpha -1}+\frac{|\lambda|\delta_1}{2\alpha}r^{\alpha}\quad\forall 0<r\le\delta_0
\end{align}
and
\begin{align}\label{integrals-bd2a}
&-\frac{(n-1)}{2}\int_0^r\frac{\rho^{-1}w_r(\rho)}{w(\rho)}\,d\rho-\frac{\lambda}{2}\int_0^r\frac{w_r(\rho)}{w(\rho)}\,d\rho+\frac{1}{2}\int_0^r\frac{\rho^{-\alpha}w_r(\rho)^2}{w(\rho)}\,d\rho\notag\\
\ge&\frac{(n-1)}{2}\left(\frac{c_2}{c_0}-\delta_1\right)\int_0^r\rho^{\alpha-2}\,d\rho
+\frac{\lambda}{2}\left(\frac{c_2}{c_0}-\mbox{sign}\,(\lambda)\delta_1\right)\int_0^r\rho^{\alpha-1}\,d\rho+\frac{1}{2}\left(\frac{c_2^2}{c_0}-\delta_1\right)\int_0^r\rho^{\alpha-2}\,d\rho
\notag\\
\ge& \frac{c_2(n-1+c_2)}{2c_0(\alpha -1)}r^{\alpha -1}+\frac{\lambda c_2}{2c_0\alpha}r^{\alpha}-\frac{n\delta_1}{2(\alpha -1)}r^{\alpha -1}-\frac{|\lambda|\delta_1}{2\alpha}r^{\alpha}\quad\forall 0<r\le\delta_0.
\end{align}
Hence by \eqref{h-origin-blow-up-rate}, \eqref{w-eqn=w-integral-eqn2}, \eqref{integrals-bd2} and \eqref{integrals-bd2a},
\begin{align}
&w_r(r)=-c_2r^{\alpha-1}+c_1r^{\alpha}+\frac{\alpha\lambda}{2}r^{\alpha}\log r
+\frac{c_2(n-1+c_2)}{2c_0(\alpha -1)}r^{2\alpha -1}+o(1)(r^{2\alpha-1})\qquad\qquad\forall 0<r\le\delta_0\label{wr-eqn10}\\
\Rightarrow\quad&r^{\alpha}h(r)=c_0-\frac{c_2}{\alpha}r^{\alpha}+\left(\frac{c_1}{\alpha+1}-\frac{\alpha\lambda}{2(\alpha +1)^2}\right)r^{\alpha +1}+\frac{\alpha\lambda}{2\alpha +2}r^{\alpha +1}\log r+\frac{c_2(n-1+c_2)}{4c_0\alpha (\alpha -1)}r^{2\alpha}\notag\\
&\qquad\qquad +o(1)r^{2\alpha}\quad\forall 0<r\le\delta_0\notag
\end{align}
 and \eqref{h-asymptotic-near-origin1} follows. Since
\begin{equation}\label{wr-identity}
w_r(r)=\alpha r^{\alpha-1}h(r)+r^{\alpha}h_r(r)\quad\forall r>0,
\end{equation}
by \eqref{h-asymptotic-near-origin1} and \eqref{wr-eqn10} we get \eqref{hr-asymptotic-near-origin1}.

Suppose $h_1\in C^2((0,\3))$ is another solution of 
\eqref{h-eqn} which satisfies \eqref{h-origin-blow-up-rate} and 
\begin{align}\label{h1-asymptotic-near-origin}
h_1(r)=&\frac{1}{r^{\alpha}}\left\{c_0-\frac{c_2}{\alpha}r^{\alpha}+\left(\frac{c_1}{\alpha+1}-\frac{\alpha\lambda}{2(\alpha +1)^2}\right)r^{\alpha +1}+\frac{\alpha\lambda}{2\alpha +2}r^{\alpha +1}\log r+\frac{c_2(n-1+c_2)}{4c_0\alpha (\alpha -1)}r^{2\alpha}\right.\notag\\
&\quad +\left.o(1)r^{2\alpha}\right\}\quad\forall 0<r\le \delta_0.
\end{align}
Let $w_1(r)=r^{\alpha}h_1(r)$. Then $w_1$ satisfies \eqref{wrr-eqn2}. Integrating the equation \eqref{wrr-eqn2} for $w_1$ over $(0,r)$ we get
\begin{align}\label{w1-eqn=w-integral-eqn2}
w_{1,r}(r)=&-c_2r^{\alpha-1}+c_1'r^{\alpha}+\frac{\alpha\lambda}{2}r^{\alpha}\log r
+r^{\alpha}\left\{-\frac{(n-1)}{2}\int_0^r\frac{\rho^{-1}w_{1,r}(\rho)}{w_1(\rho)}\,d\rho-\frac{\lambda}{2}\int_0^r\frac{w_{1,r}(\rho)}{w_1(\rho)}\,d\rho\right.\notag\\
&\quad \left.+\frac{1}{2}
\int_0^r\frac{\rho^{-\alpha}w_{1,r}(\rho)^2}{w_1(\rho)}\,d\rho\right\}\qquad\qquad\forall 0<r\le\3
\end{align}
for some constant $c_1'\in\Rr$. By \eqref{w1-eqn=w-integral-eqn2} and a similar argument as before 
we get
\begin{align}\label{h1-asymptotic-near-origin2}
h_1(r)=&\frac{1}{r^{\alpha}}\left\{c_0-\frac{c_2}{\alpha}r^{\alpha}+\left(\frac{c_1'}{\alpha+1}-\frac{\alpha\lambda}{2(\alpha +1)^2}\right)r^{\alpha +1}+\frac{\alpha\lambda}{2\alpha +2}r^{\alpha +1}\log r+\frac{c_2(n-1+c_2)}{4c_0\alpha (\alpha -1)}r^{2\alpha}\right.\notag\\
&\quad +\left.o(1)r^{2\alpha}\right\}\quad\forall 0<r\le \delta_0.
\end{align}
By \eqref{h1-asymptotic-near-origin} and \eqref{h1-asymptotic-near-origin2},
\begin{equation*}
c_1-c_1'=o(1)(r^{\alpha-1})\quad\forall 0<r\le \delta_0\quad\Rightarrow \quad c_1=c_1'\quad\mbox{ as }r\to 0^+.
\end{equation*}
Hence both $w$ and $w_1$ satisfies \eqref{w-eqn=w-integral-eqn2}. Then by Proposition \ref{w-local-existence-prop2}, $w\equiv w_1$ on $[0,\3]$. Thus $h=h_1$ on $[0,\3]$ and the solution $h$ is unique.
\end{proof}

\begin{prop}\label{h-asymptotic-behaviour-near-0-prop2}
Let $n\in\{2,3,4\}$, $\alpha=\sqrt{n}-1$, $\lambda, c_1\in\Rr$, $c_0>0$  and let $c_2$ be given by \eqref{c2-defn}.    Let $0<\3<1$ and $h\in C^2((0,\3])$ be the unique solution  of \eqref{h-eqn} in $(0,\3]$ given by Corollary \ref{h-local-existence-cor1} which satisfies \eqref{h-origin-blow-up-rate}, \eqref{w-eqn=w-integral-eqn} and \eqref{wr-initial-value} with  $w$ being given by \eqref{w-defn}. Then there exists a constant $0<\delta_0<\ve$ such that
\eqref{h-asymptotic-near-origin2} and \eqref{hr-asymptotic-near-origin2} holds.
\end{prop}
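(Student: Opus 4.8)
The plan is to run the same programme as in the proof of Proposition~\ref{h-asymptotic-behaviour-near-0-prop1}, but starting from the integral form \eqref{w-eqn=w-integral-eqn} of the equation (whose singular integrals run over $[r,\ve]$) instead of from \eqref{w-eqn=w-integral-eqn2}. This is forced on us: here $\alpha=\sqrt n-1\le 1$, so $\int_0^r\rho^{\alpha-2}\,d\rho$ diverges and \eqref{w-eqn=w-integral-eqn2} is unavailable, whereas $\int_r^{\ve}\rho^{\alpha-2}\,d\rho=\dfrac{r^{\alpha-1}-\ve^{\alpha-1}}{1-\alpha}$ (for $\alpha<1$) and $\int_r^{\ve}\rho^{-1}\,d\rho=\log(\ve/r)$ (for $\alpha=1$) are exactly the quantities generating the new asymptotic terms. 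I set $w=r^{\alpha}h$, which by hypothesis is the function supplied by Corollary~\ref{h-local-existence-cor1} and hence satisfies \eqref{w0=c0}, \eqref{wr-initial-value} and \eqref{w-eqn=w-integral-eqn}, and I split into the cases $n\in\{2,3\}$ (so $0<\alpha<1$, $c_2<0$) and $n=4$ (so $\alpha=1$, $c_2=0$).

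\emph{Case $n\in\{2,3\}$.} For each $\delta_1>0$ I fix $\delta_0\in(0,\ve)$ with $\bigl|r^{1-\alpha}w_r(r)/w(r)+c_2/c_0\bigr|\le\delta_1$ and $\bigl|(r^{1-\alpha}w_r(r))^2/w(r)-c_2^2/c_0\bigr|\le\delta_1$ on $(0,\delta_0]$, as in the proof of Proposition~\ref{h-asymptotic-behaviour-near-0-prop1}; this uses $w(0)=c_0$ and \eqref{wr-initial-value}. Writing the integrands as $\rho^{-1}w_r/w=\rho^{\alpha-2}\bigl(\rho^{1-\alpha}w_r/w\bigr)$ and $\rho^{-\alpha}w_r^2/w=\rho^{\alpha-2}\bigl(\rho^{1-\alpha}w_r\bigr)^2/w$ and splitting each $[r,\ve]$-integral in \eqref{w-eqn=w-integral-eqn} at $\delta_0$, the contribution of $(0,\delta_0]$ is $\dfrac{r^{\alpha-1}}{1-\alpha}$ times the relevant limiting value ($-c_2/c_0$, resp.\ $c_2^2/c_0$) up to an error $O(1)+O(\delta_1r^{\alpha-1})$, the tail over $[\delta_0,\ve]$ is $O(1)$, and the $\lambda$-integral over $[0,r]$ is $O(r^{\alpha})$. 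Multiplying by $r^{\alpha}$, using $O(r^{\alpha})=o(r^{2\alpha-1})$ since $\alpha<1$, and letting $\delta_1\downarrow0$, I obtain
\[
w_r(r)=-c_2r^{\alpha-1}-\frac{c_2(c_2+n-1)}{2c_0(1-\alpha)}\,r^{2\alpha-1}+o\bigl(r^{2\alpha-1}\bigr)\qquad(r\to0^+),
\]
where $c_1r^{\alpha}$ and $\tfrac{\alpha\lambda}{2}r^{\alpha}\log r$ are also absorbed into the error because $\alpha>2\alpha-1$. Integrating from $0$ to $r$ (valid since $2\alpha-1>-1$) gives $w(r)=c_0-\tfrac{c_2}{\alpha}r^{\alpha}-\tfrac{c_2(c_2+n-1)}{4c_0\alpha(1-\alpha)}r^{2\alpha}+o(r^{2\alpha})$, which on dividing by $r^{\alpha}$ is the $n\in\{2,3\}$ line of \eqref{h-asymptotic-near-origin2}; substituting the expansions of $w$ and $w_r$ into the identity \eqref{wr-identity}, i.e.\ $h_r=r^{-\alpha}w_r-\alpha r^{-1}h$, gives the corresponding line of \eqref{hr-asymptotic-near-origin2}.

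\emph{Case $n=4$.} Now $\alpha=1$, $c_2=0$, and $w_r(r)\to0$ by \eqref{wr-initial-value}. Using only $w(r)\to c_0>0$, $w_r=o(1)$ and $\int_r^{\ve}\rho^{-1}\,d\rho=O(|\log r|)$, I show each of the three bracketed integrals in \eqref{w-eqn=w-integral-eqn} is $o(|\log r|)$: for the two over $[r,\ve]$ split off a neighbourhood of $0$ on which $|w_r|\le\delta_1$ (so the integrand is $\le C\delta_1\rho^{-1}$ there) and let $\delta_1\downarrow0$; the $[0,r]$-integral is $o(r)$. Hence $r$ times the bracket is $o(r|\log r|)$, and since $c_1r=o(r|\log r|)$ too, \eqref{w-eqn=w-integral-eqn} gives $w_r(r)=\tfrac{\lambda}{2}r\log r+o(r|\log r|)$. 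Integrating, with $\int_0^r\rho\log\rho\,d\rho=\tfrac{r^2}{2}\log r-\tfrac{r^2}{4}$, gives $w(r)=c_0+\tfrac{\lambda}{4}r^2\log r+o(r^2|\log r|)$, which is the $n=4$ line of \eqref{h-asymptotic-near-origin2}; and \eqref{wr-identity}, now reading $h_r=r^{-1}w_r-r^{-2}w$, gives the $n=4$ line of \eqref{hr-asymptotic-near-origin2}.

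The one delicate point, which I expect to be the main obstacle, is the accounting of the error terms: in each case one must verify that the remainders generated by the integral operators are genuinely $o(r^{2\alpha-1})$ (for $n=2,3$), respectively $o(r|\log r|)$ and then $o(r^2|\log r|)$ (for $n=4$), rather than only $O(\cdot)$ with a wrong constant. As in the proof of Proposition~\ref{h-asymptotic-behaviour-near-0-prop1}, this is handled by cutting every integral at a radius $\delta_0$ beyond which the integrand is uniformly controlled and letting the cutoff parameter $\delta_1$ tend to $0$; no iteration beyond a single substitution into \eqref{w-eqn=w-integral-eqn} is required, since the leading asymptotics of $w_r$ already follow from the zeroth-order data $w(0)=c_0$ and \eqref{wr-initial-value}.
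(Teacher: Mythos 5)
Your proposal follows essentially the same route as the paper's proof: split the $[r,\ve]$-integrals in \eqref{w-eqn=w-integral-eqn} at a radius $\delta_0$ where \eqref{h-origin-blow-up-rate} and \eqref{wr-initial-value} give uniform bounds on $r^{1-\alpha}w_r/w$ and $(r^{1-\alpha}w_r)^2/w$, treat the tail and the $\lambda$-integral as lower-order, let $\delta_1\downarrow0$ to get the $w_r$ asymptotics \eqref{wr-eqn11}, integrate, and recover $h_r$ via \eqref{wr-identity}. The argument and the resulting expansions agree with the paper's (the only slip is cosmetic: the inner piece of the split integrals is over $[r,\delta_0]$, not $(0,\delta_0]$).
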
 
\begin{proof}
Note that $\alpha<1$, $1/(1-\alpha)\le 4$ and $c_2<0$ when  $n=2$ or $3$ and $\alpha=1$, $c_2=0$, when $n=4$.  Let $\delta_1$ satisfy \eqref{delta1-defn} when $n=2,3$ and $0<\delta_1<\ve$ when $n=4$.
By \eqref{h-origin-blow-up-rate} and \eqref{wr-initial-value}  there exists a constant $0<\delta_0<\ve$  such that \eqref{wr-w-ratio-upper-lower-bds1} and \eqref{wr-w-ratio-upper-lower-bds2} holds.
Hence by \eqref{wr-w-ratio-upper-lower-bds1} and \eqref{wr-w-ratio-upper-lower-bds2}, for any $0<r<\delta_0$,
\begin{align}\label{integrals-bd1}
&\frac{(n-1)}{2}\int_r^{\ve}\frac{\rho^{-1}w_r(\rho)}{w(\rho)}\,d\rho-\frac{\lambda}{2}\int_0^r\frac{w_r(\rho)}{w(\rho)}\,d\rho-\frac{1}{2}\int_r^{\ve}\frac{\rho^{-\alpha}w_r(\rho)^2}{w(\rho)}\,d\rho\notag\\
\le&\frac{(n-1)}{2}\left(-\frac{c_2}{c_0}+\delta_1\right)\int_r^{\delta_0}\rho^{\alpha-2}\,d\rho
+\frac{\lambda}{2}\left(\frac{c_2}{c_0}+\mbox{sign}\,(\lambda)\delta_1\right)\int_0^r\rho^{\alpha-1}\,d\rho
-\frac{1}{2}\left(\frac{c_2^2}{c_0}-\delta_1\right)\int_r^{\delta_0}\rho^{\alpha-2}\,d\rho\notag\\
&\quad +c_8\notag\\
\le&\left\{\begin{aligned}
&-\frac{c_2(c_2+n-1)}{2c_0(1-\alpha)}(r^{\alpha -1}-\delta_0^{\alpha -1})+\frac{\lambda c_2}{2c_0\alpha}r^{\alpha}+\frac{n\delta_1}{2(1-\alpha)}(r^{\alpha -1}-\delta_0^{\alpha -1})+\frac{|\lambda|\delta_1}{2\alpha}r^{\alpha}+c_8\quad\mbox{ if }n=2,3\\
&\frac{n\delta_1}{2}(\log\delta_0-\log r)+\frac{|\lambda|\delta_1}{2}r+c_8\qquad\qquad\qquad\qquad\qquad\qquad\qquad\qquad\qquad\,\,\mbox{ if } n=4
\end{aligned}\right.
\end{align}
and
\begin{align}\label{integrals-bd1a}
&\frac{(n-1)}{2}\int_r^{\ve}\frac{\rho^{-1}w_r(\rho)}{w(\rho)}\,d\rho-\frac{\lambda}{2}\int_0^r\frac{w_r(\rho)}{w(\rho)}\,d\rho-\frac{1}{2}\int_r^{\ve}\frac{\rho^{-\alpha}w_r(\rho)^2}{w(\rho)}\,d\rho\notag\\
\ge&\frac{(n-1)}{2}\left(-\frac{c_2}{c_0}-\delta_1\right)\int_r^{\delta_0}\rho^{\alpha-2}\,d\rho
+\frac{\lambda}{2}\left(\frac{c_2}{c_0}-\mbox{sign}\,(\lambda)\delta_1\right)\int_0^r\rho^{\alpha-1}\,d\rho
-\frac{1}{2}\left(\frac{c_2^2}{c_0}+\delta_1\right)\int_r^{\delta_0}\rho^{\alpha-2}\,d\rho\notag\\
&\quad +c_8\notag\\
\ge&\left\{\begin{aligned}
&-\frac{c_2(c_2+n-1)}{2c_0(1-\alpha)}(r^{\alpha -1}-\delta_0^{\alpha -1})+\frac{\lambda c_2}{2c_0\alpha}r^{\alpha}-\frac{n\delta_1}{2(1-\alpha)}(r^{\alpha -1}-\delta_0^{\alpha -1})-\frac{|\lambda|\delta_1}{2\alpha}r^{\alpha}+c_8\quad\mbox{ if }n=2,3\\
&-\frac{n\delta_1}{2}(\log\delta_0-\log r)-\frac{|\lambda|\delta_1}{2}r+c_8\qquad\qquad\qquad\qquad\qquad\qquad\qquad\qquad\quad\mbox{ if } n=4
\end{aligned}\right.
\end{align}
where 
\begin{equation*}
c_8=\frac{(n-1)}{2}\int_{\delta_0}^{\ve}\frac{\rho^{-1}w_r(\rho)}{w(\rho)}\,d\rho
-\frac{1}{2}\int_{\delta_0}^{\ve}\frac{\rho^{-\alpha}w_r(\rho)^2}{w(\rho)}\,d\rho.
\end{equation*}
Thus by \eqref{h-origin-blow-up-rate}, \eqref{w-eqn=w-integral-eqn}, \eqref{integrals-bd1} and\eqref{integrals-bd1a},
\begin{align}
&w_r(r)=\left\{\begin{aligned}
&-c_2r^{\alpha-1}-\frac{c_2(c_2+n-1)}{2c_0(1-\alpha)}r^{2\alpha-1}+o(1)(r^{2\alpha-1})\quad\forall 0<r\le\delta_0\quad\mbox{ if }n=2,3\\
&\frac{\lambda}{2}r\log r+o(1)(r|\log r|)\qquad\qquad\qquad\qquad\quad\forall 0<r\le\delta_0\quad\mbox{ if }n=4
\end{aligned}\right.\label{wr-eqn11}\\
\Rightarrow\quad&w(r)=r^{\alpha}h(r)=\left\{\begin{aligned}
&c_0-\frac{c_2}{\alpha}r^{\alpha}-\frac{c_2(c_2+n-1)}{4c_0\alpha (1-\alpha)}r^{2\alpha}+o(1)r^{2\alpha}\quad\forall 0<r\le\delta_0
\quad\mbox{ if }n=2,3\\
&c_0+\frac{\lambda}{4}r^2\log r+o(1)r^2|\log r|\qquad\qquad\quad\forall 0<r\le\delta_0\quad\mbox{ if }n=4
\end{aligned}\right.\label{w-eqn11}
\end{align}
and \eqref{h-asymptotic-near-origin2} follows. By \eqref{h-asymptotic-near-origin2},  \eqref{wr-identity} and \eqref{wr-eqn11} we get \eqref{hr-asymptotic-near-origin2} and the proposition follows.

\end{proof}

\section{Global existence and uniqueness of singular solutions}
\setcounter{equation}{0}
\setcounter{thm}{0}

In this section we will use a modification of the technique  of S.Y.~Hsu \cite{H} to prove the  
global existence of infinitely many singular solutions of \eqref{h-eqn}, \eqref{h-origin-blow-up-rate}, in $(0,\infty)$.
We will also prove the uniqueness of the global singular solution of such equation in terms of its asymptotic behaviour near the origin.  

\begin{lem}\label{h-hr-eqn}
Let $2\le n\in\Z^+$, $\lambda\in\mathbb{R}$ and $L>0$. Suppose $h\in C^2((0,L))$ satisfies \eqref{h-eqn} in $(0,L)$. Then  
\begin{equation}\label{h-derivative-integral-formula}
h_r(r_1)=\frac{n-1}{r_1}+\lambda+\sqrt{\frac{h(r_1)}{h(r_2)}}\left(h_r(r_2)-\frac{n-1}{r_2}-\lambda\right)+\frac{(n-1)\sqrt{h(r_1)}}{2}\int_{r_2}^{r_1}\frac{h(\rho)+1}{\rho^2\sqrt{h(\rho)}}\,d\rho
\end{equation} 
holds for any $0<r_2<r_1<L$.
\end{lem}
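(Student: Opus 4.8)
The plan is to recognize that, after a judicious change of unknown, equation \eqref{h-eqn} becomes a \emph{first order linear} ODE in the new variable, which can then be solved explicitly by an integrating factor. Concretely, I would introduce
\[
p(r):=h_r(r)-\frac{n-1}{r}-\lambda ,\qquad 0<r<L,
\]
so that $p_r(r)=h_{rr}(r)+\dfrac{n-1}{r^2}$. The point of this choice is that the awkward factor appearing on the right of \eqref{h-eqn} is exactly $rh_r-\lambda r-(n-1)=r\,p$.

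Next I would divide \eqref{h-eqn} by $2r^2h(r)$, which is legitimate since $h>0$ on $(0,L)$, and substitute the identity above to obtain
\[
h_{rr}(r)=\frac{(n-1)(h(r)-1)}{2r^2}+\frac{h_r(r)\,p(r)}{2h(r)} .
\]
Adding $(n-1)/r^2$ to both sides and using $(n-1)(h-1)+2(n-1)=(n-1)(h+1)$ then yields
\[
p_r(r)-\frac{h_r(r)}{2h(r)}\,p(r)=\frac{(n-1)(h(r)+1)}{2r^2},
\]
a linear ODE for $p$. The integrating factor is $\exp\!\big(-\int \tfrac{h_r}{2h}\big)=h^{-1/2}$, so the equation is equivalent to
\[
\frac{d}{dr}\Big(h(r)^{-1/2}p(r)\Big)=\frac{(n-1)\big(h(r)+1\big)}{2r^2\sqrt{h(r)}} .
\]
Integrating this over $[r_2,r_1]\subset(0,L)$ and multiplying through by $\sqrt{h(r_1)}$ gives
\[
p(r_1)=\sqrt{\tfrac{h(r_1)}{h(r_2)}}\,p(r_2)+\frac{(n-1)\sqrt{h(r_1)}}{2}\int_{r_2}^{r_1}\frac{h(\rho)+1}{\rho^2\sqrt{h(\rho)}}\,d\rho,
\]
which is precisely \eqref{h-derivative-integral-formula} after writing out $p(r_1)$ and $p(r_2)$.

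I do not expect a genuine obstacle here: once the substitution $p=h_r-(n-1)/r-\lambda$ is made and one notices that $rh_r-\lambda r-(n-1)=rp$, the rest is the standard integrating-factor solution of a linear first order equation, and all operations (division by $h$, taking $h^{-1/2}$, integrating on $[r_2,r_1]$) are valid because $h\in C^2((0,L))$ is positive there. The only thing to be careful about is bookkeeping of the constant terms when passing between $p$ and $h_r$, and the fact that the integral is over a compact subinterval of $(0,L)$ so no integrability issue at the endpoints arises.
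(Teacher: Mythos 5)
Your proof is correct and is essentially the paper's argument: both rest on the observation that $h^{-1/2}$ is the integrating factor, so that $\frac{d}{dr}\bigl(h(r)^{-1/2}\bigl(h_r(r)-\tfrac{n-1}{r}-\lambda\bigr)\bigr)=\frac{(n-1)(h(r)+1)}{2r^2\sqrt{h(r)}}$, which one then integrates over $[r_2,r_1]$. The paper arrives at the same identity by first integrating $(h^{-1/2}h_r)_r$ and then recognizing the leftover term $\int_{r_2}^{r_1}\frac{(n-1+\lambda\rho)h_r(\rho)}{2\rho h(\rho)^{3/2}}\,d\rho$ as an exact derivative plus a correction integral, whereas you absorb the constants into $p=h_r-\tfrac{n-1}{r}-\lambda$ before integrating; this is only an organizational difference.
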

\begin{proof}
By \eqref{h-eqn},
\begin{align*}
(h^{-1/2}h_r)_r=&\frac{(n-1)(h-1)}{2r^2h^{1/2}}-\frac{(n-1+\lambda r)h_r}{2rh^{3/2}}\quad\forall r>0\\
\Rightarrow\qquad h_r(r_1)=&\sqrt{h(r_1)}\left\{\frac{h_r(r_2)}{\sqrt{h(r_2)}}+\frac{(n-1)}{2}\int_{r_2}^{r_1}\frac{h(\rho)-1}{\rho^2\sqrt{h(\rho)}}\,d\rho-\int_{r_2}^{r_1}\frac{(n-1+\lambda\rho)h_r(\rho)}{2\rho h(\rho)^{3/2}}\,d\rho\right\}\\
=&\sqrt{h(r_1)}\left\{\frac{h_r(r_2)}{\sqrt{h(r_2)}}+\frac{(n-1)}{2}\int_{r_2}^{r_1}\frac{h(\rho)-1}{\rho^2\sqrt{h(\rho)}}\,d\rho+\left(\frac{n-1}{r_1}+\lambda\right)\frac{1}{\sqrt{h(r_1)}}\right.\\
&\qquad\left.-\left(\frac{n-1}{r_2}+\lambda\right)\frac{1}{\sqrt{h(r_2)}}+(n-1)\int_{r_2}^{r_1}\frac{d\rho}{\rho^2\sqrt{h(\rho)}}\right\}\quad\forall 0<r_2<r_1<L
\end{align*}
and \eqref{h-derivative-integral-formula} follows.

\end{proof}

We now observe that by an argument similar to the proof of Lemma 2.3, Lemma 2.4, Lemma 2.5 and Lemma 2.6 of \cite{H} but with \eqref{h-eqn} and \eqref{h-derivative-integral-formula} replacing (1.6) and (2.25) of \cite{H} in the proof there we have the following results.

\begin{lem}(cf. Lemma 2.3 and Lemma 2.4 of \cite{H})\label{h-bd-lem}
Let $2\le n\in\Z^+$ and $\lambda\in\mathbb{R}$. Suppose $h\in C^2((0,L))$ satisfies \eqref{h-eqn}
in $(0,L)$ for some constant $L\in (0,\infty)$  such that $L<-(n-1)/\lambda$ if $\lambda<0$. Then there exist  constants $C_2>C_1>0$ such that
\begin{equation}\label{h-uniform-bd}
C_1\le h(r)\le C_2\quad\forall L/2\le r\le L.
\end{equation}
\end{lem}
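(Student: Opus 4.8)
The plan is to adapt the argument of Lemma 2.3 and Lemma 2.4 of \cite{H}, using \eqref{h-eqn} and the integral identity \eqref{h-derivative-integral-formula} of Lemma \ref{h-hr-eqn} in place of (1.6) and (2.25) of \cite{H}. The core observation is that \eqref{h-eqn} can be rewritten as a first order equation for the quantity $h^{-1/2}h_r$, which is exactly what \eqref{h-derivative-integral-formula} encodes: for $0<r_2<r_1<L$,
\begin{equation*}
h_r(r_1)=\frac{n-1}{r_1}+\lambda+\sqrt{\frac{h(r_1)}{h(r_2)}}\left(h_r(r_2)-\frac{n-1}{r_2}-\lambda\right)+\frac{(n-1)\sqrt{h(r_1)}}{2}\int_{r_2}^{r_1}\frac{h(\rho)+1}{\rho^2\sqrt{h(\rho)}}\,d\rho.
\end{equation*}
This identity says that if $h$ is not too small somewhere to the right of a point, then $h_r$ cannot be too negative there, which is the mechanism that prevents $h$ from decaying to $0$ on $[L/2,L]$; conversely, the same identity controls how fast $h$ can grow.

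First I would establish the \emph{lower} bound $h\ge C_1$ on $[L/2,L]$. Fix a reference point, say $r_0=L/2$, where $h(r_0)$ and $h_r(r_0)$ are finite positive numbers. For $r\in[L/2,L]$ I apply \eqref{h-derivative-integral-formula} with $r_2=r_0$, $r_1=r$; since the integral term has a definite sign (the integrand $\tfrac{h+1}{\rho^2\sqrt h}$ is positive) and all other factors are controlled on the compact interval $[L/2,L]$ away from $0$, I get a differential inequality of the form $h_r(r)\ge \sqrt{h(r)}\,\psi(r)-C$ where $\psi$ is bounded below. Rewriting this as $(\sqrt{h})_r \ge \tfrac12\psi(r) - \tfrac{C}{2\sqrt{h(r)}}$ and arguing by contradiction — if $h$ became arbitrarily small on $[L/2,L]$ it would have to be decreasing through small values, forcing $h_r$ very negative, which \eqref{h-derivative-integral-formula} forbids once $h$ is small (the dominant $\sqrt{h(r)}$ factor multiplying the otherwise bounded terms cannot produce a large negative contribution, and the $\tfrac{n-1}{r}+\lambda$ term is bounded) — yields a uniform lower bound $C_1>0$. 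The hypothesis $L<-(n-1)/\lambda$ when $\lambda<0$ is used precisely here: it guarantees $\tfrac{n-1}{r}+\lambda>0$ on $[L/2,L]$ (equivalently $r<-(n-1)/\lambda$), so the ``drift'' terms in \eqref{h-derivative-integral-formula} do not conspire against the lower bound. This is the step I expect to be the main obstacle, since one must rule out $h\to 0$ using only the one-sided integral control and the boundedness of coefficients, rather than a clean maximum principle.

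For the \emph{upper} bound, once $C_1\le h\le $ (something finite in a neighborhood to the left), I again invoke \eqref{h-derivative-integral-formula}: with the lower bound $h\ge C_1$ in hand, the integral $\int_{r_2}^{r_1}\tfrac{h(\rho)+1}{\rho^2\sqrt{h(\rho)}}\,d\rho$ is bounded above by $C\int_{r_2}^{r_1}(h(\rho)+1)\,d\rho$ on the compact interval, and $\sqrt{h(r_1)/h(r_2)}\le \sqrt{h(r_1)/C_1}$, so \eqref{h-derivative-integral-formula} gives $h_r(r_1)\le A\sqrt{h(r_1)}\bigl(1+\int_{L/2}^{r_1}(h+1)\,d\rho\bigr)+B$ for constants $A,B$ depending only on $n,\lambda,L$ and on the values of $h,h_r$ at $L/2$. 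A Gronwall-type integration of this inequality on $[L/2,L]$ (treating $h$ as the unknown and using that the interval has finite length $L/2$) produces the uniform upper bound $h\le C_2$. Combining the two bounds gives \eqref{h-uniform-bd}, completing the proof. Throughout, all constants $C_1,C_2$ depend only on $n$, $\lambda$, $L$, and on $h(L/2)$, $h_r(L/2)$, which is exactly the dependence allowed by the statement.
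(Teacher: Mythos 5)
Your overall strategy is the one the paper intends: the paper writes out no proof of this lemma, asserting only that one repeats Lemma 2.3 and Lemma 2.4 of \cite{H} with \eqref{h-eqn} and the identity \eqref{h-derivative-integral-formula} of Lemma \ref{h-hr-eqn} as the replacements, and that identity is exactly your tool. Your lower-bound step is essentially right: taking $r_2=L/2$, $r_1=r$, dropping the positive integral term, and using $\frac{n-1}{r}+\lambda\ge\frac{n-1}{L}+\lambda=:m>0$ (this is precisely where the hypothesis $L<-(n-1)/\lambda$ for $\lambda<0$ enters), the identity gives $h_r(r)\ge m-K\sqrt{h(r)}$ with $K=\bigl|h_r(L/2)-\tfrac{2(n-1)}{L}-\lambda\bigr|/\sqrt{h(L/2)}$; hence $h_r>0$ wherever $h<(m/(2K))^2$, so $h$ cannot cross this level downward and $C_1=\min\bigl(h(L/2),(m/(2K))^2\bigr)$ works. (Minor slip: $h_r(L/2)$ need not be positive, only finite; and your displayed form ``$h_r\ge\sqrt h\,\psi-C$'' has the roles of the terms reversed, though your verbal description of the mechanism is correct.)

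The upper-bound step as written has a genuine gap. You bound $\int_{L/2}^{r}\frac{h+1}{\rho^2\sqrt h}\,d\rho\le C\int_{L/2}^r(h+1)\,d\rho$ and claim that ``Gronwall-type integration'' of $h_r\le A\sqrt h\bigl(1+\int_{L/2}^r(h+1)\,d\rho\bigr)+B$ gives a bound on $[L/2,L]$. It does not: setting $u=\sqrt h$ and $W(r)=\int_{L/2}^r u^2$, your inequality only yields $W'\le c_1+c_2W^2$, a Riccati-type (superlinear) inequality whose comparison solutions blow up in finite time, so whether the bound survives to $r=L$ depends on the size of the constants and does not follow from Gronwall alone. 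The repair stays inside your framework but must keep the $1/\sqrt h$ factor: since $h\ge C_1$ one has $\frac{h+1}{\sqrt h}=\sqrt h+\frac{1}{\sqrt h}\le\sqrt h+C_1^{-1/2}$, and dividing \eqref{h-derivative-integral-formula} by $2\sqrt{h(r)}$, i.e.\ working with $(\sqrt h)_r=h_r/(2\sqrt h)$, gives the \emph{linear} integral inequality $(\sqrt h)_r(r)\le A+B\int_{L/2}^r\sqrt{h(\rho)}\,d\rho$ on $[L/2,L)$, with $A,B$ depending only on $n,\lambda,L,h(L/2),h_r(L/2),C_1$. Then $U(r)=\int_{L/2}^r\sqrt h$ satisfies $U''\le A+BU$, $U(L/2)=0$, $U'(L/2)=\sqrt{h(L/2)}$, and comparison with the explicit solution of $V''=A+BV$ bounds $U$, hence $(\sqrt h)_r$, hence $h$, on the finite interval, giving $C_2$ and \eqref{h-uniform-bd}.
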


\begin{lem}(cf. Lemma 2.5 and Lemma 2.6 of \cite{H})\label{h-derivative-bd-lem}
Let $2\le n\in\Z^+$ and $\lambda\in\mathbb{R}$. Suppose $h\in C^2((0,L))$ satisfies \eqref{h-eqn}
in $(0,L)$ for some constant $L\in (0,\infty)$  such that $L<-(n-1)/\lambda$ if $\lambda<0$. Then there exist constants $C_4>C_3$ such that
\begin{equation}\label{h-derivative-uniform-bd}
C_3\le h_r(r)\le C_4\quad\forall L/2\le r\le L.
\end{equation}  
\end{lem}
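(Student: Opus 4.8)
The plan is to derive \eqref{h-derivative-uniform-bd} from the representation formula \eqref{h-derivative-integral-formula} of Lemma \ref{h-hr-eqn}: starting from a single controlled value of $h_r$ we propagate the bound to all of $[L/2,L)$ using the two-sided bound on $h$ furnished by Lemma \ref{h-bd-lem}. This is exactly the scheme of Lemma~2.5 and Lemma~2.6 of \cite{H}, with \eqref{h-eqn} and \eqref{h-derivative-integral-formula} playing the roles of (1.6) and (2.25) there.

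First I would produce an anchor point. Since $h$ satisfies \eqref{h-eqn} in $(0,L)$ and $L<-(n-1)/\lambda$ when $\lambda<0$, it also satisfies \eqref{h-eqn} in $(0,L')$ for every $L'\in(0,L]$ (the requirement $L'<-(n-1)/\lambda$ being then automatic). Hence Lemma \ref{h-bd-lem} applied with $L'=3L/4$, together with the bound already available on $[L/2,L]$, produces constants $\4 C_2>\4 C_1>0$ with $\4 C_1\le h(r)\le\4 C_2$ on $[3L/8,L)$. By the mean value theorem there is then a point $r_\ast\in(3L/8,L/2)$ with $|h_r(r_\ast)|=\frac{|h(L/2)-h(3L/8)|}{L/8}\le\frac{8(\4 C_2-\4 C_1)}{L}$.

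Next, for each $r_1\in[L/2,L)$ I would apply \eqref{h-derivative-integral-formula} with $r_2=r_\ast$; since $r_\ast<L/2\le r_1$ the forward formula always applies and no backward variant is needed. Every term on the right-hand side is then manifestly bounded above and below by constants depending only on $n$, $\lambda$, $L$, $\4 C_1$, $\4 C_2$: one has $\frac{n-1}{r_1}\in\bigl[\frac{n-1}{L},\frac{2(n-1)}{L}\bigr]$, the factor $\sqrt{h(r_1)/h(r_\ast)}$ lies in $\bigl[\sqrt{\4 C_1/\4 C_2},\sqrt{\4 C_2/\4 C_1}\,\bigr]$, the quantity $h_r(r_\ast)-\frac{n-1}{r_\ast}-\lambda$ is bounded by the previous step, and on $[r_\ast,r_1]\subset[3L/8,L)$ the integrand $\frac{h(\rho)+1}{\rho^2\sqrt{h(\rho)}}$ is controlled by $\frac{\4 C_2+1}{(3L/8)^2\sqrt{\4 C_1}}$ over an interval of length $<L$, while $\sqrt{h(r_1)}\le\sqrt{\4 C_2}$. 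Collecting these estimates yields constants $C_4>C_3$ with $C_3\le h_r(r_1)\le C_4$ for all $r_1\in[L/2,L)$, which is \eqref{h-derivative-uniform-bd}.

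I do not expect a genuine difficulty here: once Lemma \ref{h-hr-eqn} and Lemma \ref{h-bd-lem} are available the argument is essentially bookkeeping, and all the analytic content is concentrated in the formula \eqref{h-derivative-integral-formula}. The one point requiring a little care — and the reason for the preliminary application of Lemma \ref{h-bd-lem} on the shorter interval $(0,3L/4)$ — is that to feed \eqref{h-derivative-integral-formula} one first needs a point with a controlled first derivative strictly to the left of $L/2$, where Lemma \ref{h-bd-lem} as stated does not by itself provide information.
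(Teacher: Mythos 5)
Your argument is correct and is essentially the proof the paper intends: the paper simply defers to Lemmas 2.5--2.6 of \cite{H}, and the scheme there is exactly what you carry out --- use the two-sided bound on $h$ from Lemma \ref{h-bd-lem} (applied also on a slightly shorter interval to manufacture, via the mean value theorem, an anchor point left of $L/2$ with controlled $h_r$) and then propagate through the representation formula \eqref{h-derivative-integral-formula}. The only caveat is inherent in the statement rather than in your proof: since $h\in C^2((0,L))$, the bound is really obtained on $[L/2,L)$, which is all that is used in the continuation argument.
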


We next observe that by standard ODE theory we have the following result.

\begin{lem}(cf. Lemma 2.7 of \cite{H})\label{h-extension-lem}
Let $2\le n\in\Z^+$, $\lambda\in\Rr$,  $L>0$, $b_0\in (C_1,C_2)$, $b_1\in (C_4,C_3)$ for some constants $C_2>C_1>0$ and $C_3>C_4$. Then there exists a constant $0<\delta_1<L/4$ depending only on $C_1, C_2, C_3, C_4$ such that for any $r_0\in (L/2,L)$ \eqref{h-eqn} has a unique solution $\4{h}\in C^2((r_0-\delta_1,r_0+\delta_1))$ in $(r_0-\delta_1,r_0+\delta_1)$ which satisfies
\begin{equation}\label{h-tilde-initial-condition}
\4{h}(r_0)=b_0\quad\mbox{ and }\quad \4{h}_r(r_0)=b_1.
\end{equation} 
\end{lem}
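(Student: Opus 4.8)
The plan is to recast \eqref{h-eqn} as a first-order system, note that its right-hand side is smooth (hence locally Lipschitz) away from $\{r=0\}\cup\{h=0\}$, and apply the quantitative Picard--Lindel\"of theorem on a single compact box chosen independently of $r_0$, so that the existence time $\delta_1$ depends only on the constants in the statement. Concretely, on $\{r>0,\ h>0\}$ equation \eqref{h-eqn} is equivalent to $h_{rr}=F(r,h,h_r)$ with
\begin{equation*}
F(r,u_1,u_2):=\frac{(n-1)u_1(u_1-1)+ru_2\left(ru_2-\lambda r-(n-1)\right)}{2r^2u_1},
\end{equation*}
and setting $u=(u_1,u_2)=(h,h_r)$ this reads $u_1'=u_2$, $u_2'=F(r,u_1,u_2)$. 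The field $(r,u_1,u_2)\mapsto(u_2,F(r,u_1,u_2))$ is $C^\infty$ on the open set $\Omega:=\{(r,u_1,u_2):r>0,\ u_1>0\}$, hence bounded and Lipschitz in $(u_1,u_2)$, uniformly in $r$, on any compact subset of $\Omega$.

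Next I would pin down the compact region. Since $r_0\in(L/2,L)$, the interval $(r_0-\delta_1,r_0+\delta_1)$ lies in $(L/4,5L/4)$ as soon as $\delta_1<L/4$, regardless of $r_0$; and the admissible data satisfy $b_0\in(C_1,C_2)$, $b_1\in(C_4,C_3)$. Accordingly set
\begin{equation*}
K:=\left[\tfrac{L}{4},\tfrac{5L}{4}\right]\times\left[\tfrac{C_1}{2},2C_2\right]\times\left[C_4-1,C_3+1\right],
\end{equation*}
which (using $C_1>0$) is a compact subset of $\Omega$ and contains a neighbourhood of uniform size around every point $(r_0,b_0,b_1)$ with $r_0\in(L/2,L)$, $b_0\in(C_1,C_2)$, $b_1\in(C_4,C_3)$. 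Put $M:=\sup_K\left(|u_2|+|F|\right)$ and let $\Lambda$ be a Lipschitz constant for the system's right-hand side on $K$; both are finite and depend only on $n,\lambda,L,C_1,C_2,C_3,C_4$.

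Then I would run the standard Picard-iteration/contraction argument. There is $\delta_1\in(0,L/4)$, determined by $M$, $\Lambda$, $L$ and the fixed margins built into $K$ — hence by $n,\lambda,L,C_1,C_2,C_3,C_4$ only — so small that a trajectory issuing from any admissible $(r_0,b_0,b_1)$ cannot leave $K$ within time $\delta_1$; on $C\left([r_0-\delta_1,r_0+\delta_1];K\right)$ the Picard map is then a contraction with a unique fixed point $u$. Because $u_1\ge C_1/2>0$ throughout and $F$ is $C^\infty$, bootstrapping the equation gives $\4 h:=u_1\in C^2\left((r_0-\delta_1,r_0+\delta_1)\right)$ (in fact $C^\infty$), $\4 h>0$, solving \eqref{h-eqn} with $\4 h(r_0)=b_0$ and $\4 h_r(r_0)=b_1$. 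Uniqueness among all $C^2$ solutions of \eqref{h-eqn} with these data follows from Gr\"onwall's inequality applied to the difference of two such solutions, each positive and hence governed by the same locally Lipschitz system on $K$.

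The step needing the most care is the uniformity claim — that one may fix a single compact set $K$, and hence a single $\delta_1$, valid simultaneously for every $r_0\in(L/2,L)$ and every initial datum in the prescribed boxes. This is exactly why the $r$-interval is confined to $(L/4,5L/4)$ from the start; once that is arranged, the remainder is the textbook quantitative existence-and-uniqueness theorem for ODEs. (Strictly, $\delta_1$ also depends on the parameters $n,\lambda,L$, but these are fixed throughout the lemma.)
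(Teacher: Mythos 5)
Your proposal is correct and matches the paper's route: the paper offers no detailed argument, simply invoking ``standard ODE theory'' (cf.\ Lemma 2.7 of \cite{H}), and your write-up is precisely that standard quantitative Picard--Lindel\"of argument, with the useful extra care of fixing one compact box in $\{r>0,\,h>0\}$ so that $\delta_1$ is uniform over $r_0\in(L/2,L)$ and over the data $(b_0,b_1)$. The only cosmetic point is that uniqueness is cleanest stated via local Lipschitz continuity on the open set $\{r>0,\,u_1>0\}$ (where any $C^2$ solution of \eqref{h-eqn} must live, since $h>0$ is part of the equation), rather than on $K$ itself, but this does not affect the validity of your argument.
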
 

We are now ready for the proof of Theorem \ref{h-existence-thm}.

\noindent{\bf Proof of Theorem \ref{h-existence-thm}}: 
We will use a modification of the proof of Theorem 1.1 of \cite{H} to prove the theorm.
We first observe that by Corollary \ref{h-local-existence-cor1} there exists a constant $0<\3<1$ such that \eqref{h-eqn} has a unique solution $h\in C^2((0,\3])$ in $(0,\3]$ which satisfies \eqref{h-origin-blow-up-rate} and \eqref{w-eqn=w-integral-eqn} with  $w$ being given by \eqref{w-defn}. Moreover \eqref{wr-initial-value} holds. Let $(0,L)$ be the maximal interval of existence of solution $h\in C^2((0,L))$ of \eqref{h-eqn} in $(0,L)$ which satisfies \eqref{h-origin-blow-up-rate}. Suppose $L<\infty$. Then by Lemma \ref{h-bd-lem} and Lemma \ref{h-derivative-bd-lem} there exist constants 
$C_2>C_1>0$ and $C_3>C_4$ such that \eqref{h-uniform-bd} and \eqref{h-derivative-uniform-bd} hold. 

 Then by Lemma \ref{h-extension-lem}  there exists a constant $0<\delta_1<L/4$ depending only on $C_1, C_2, C_3,C_4$ such that for any $r_0\in (L/2,L)$ \eqref{h-eqn} has a unique solution $\4{h}\in C^2((r_0-\delta_1,r_0+\delta_1))$ in $(r_0-\delta_1,r_0+\delta_1)$ which satisfies
\eqref{h-tilde-initial-condition} with $b_0=h(r_0)$ and $b_1=h_r(r_0)$. We now set $r_0=L-(\delta_1/2)$ and extend $h$ to a function on $[0,L+(\delta_1/2))$ by setting
$h(r)=\4{h}(r)$ for any $r\in [L,L+(\delta_1/2))$. Then $h\in C^2((0,L+(\delta_1/2)))$ is a solution of \eqref{h-eqn} in $(0,L+\delta_1)$ which satisfies \eqref{h-origin-blow-up-rate} and \eqref{w-eqn=w-integral-eqn}. This contradicts the choice of $L$. Hence $L=\infty$ and there exists a solution $h\in C^2((0,\infty))$ of \eqref{h-eqn} which satisfies \eqref{h-origin-blow-up-rate} and \eqref{w-eqn=w-integral-eqn}. 

Suppose $h_1\in C^2((0,\infty))$ is another solution of \eqref{h-eqn} which satisfies \eqref{h-origin-blow-up-rate} and \eqref{w-eqn=w-integral-eqn} with $w$ being replaced by $w_1=r^{\alpha}h_1(r)$. Then both $w$ and $w_1$ satisfies \eqref{wrr-eqn2}. Hence both $w$ and $w_1$ satisfies \eqref{wrr-eqn} and \eqref{w0=c0} in $(0,\ve]$. Therefore by Proposition \ref{w-local-existence-prop} $w(r)\equiv w_1(r)$ in $(0,\ve]$. Hence $h(r)=h_1(r)$ in $(0,\ve]$. Then by standard ODE theory $h(r)=h_1(r)$ in $[\ve,\infty)$. Thus $h(r)=h_1(r)$ in $(0,\infty)$ and the solution $h$ is unique.

{\hfill$\square$\vspace{6pt}}

\noindent{\bf Proof of Theorem \ref{h-asymptotic-behaviour-thm1}}:
By Corollary \ref{h-local-existence-cor2} and an argument similar to the proof of Theorem \ref{h-existence-thm} there exists a  unique solution $h\in C^2((0,\infty))$ of \eqref{h-eqn} in $(0,\infty)$ which satisfies \eqref{h-origin-blow-up-rate} and \eqref{w-eqn=w-integral-eqn2} in $(0,\ve)$ with $w$ given by \eqref{w-defn} for some $0<\ve<1$. By \eqref{w-eqn=w-integral-eqn2} and the same argument as the proof of Proposition \ref{h-asymptotic-behaviour-near-0-prop1} we get \eqref{h-asymptotic-near-origin1} and   \eqref{hr-asymptotic-near-origin1} . Suppose $h_1\in C^2((0,\infty))$ is another solution of \eqref{h-eqn} in $(0,\infty)$ which satisfies \eqref{h-origin-blow-up-rate} and \eqref{h-asymptotic-near-origin1}. Then by an argument similar to the proof of Proposition \ref{h-asymptotic-behaviour-near-0-prop1}, $h_1(r)\equiv h(r)$ in $(0,\ve]$. Hence by  standard ODE uniqueness theory $h_1(r)\equiv h(r)$ in $[\ve,\infty)$ and the theorem follows.

{\hfill$\square$\vspace{6pt}}

Finally by Theorem \ref{h-existence-thm} and an argument similar  to the proof of Proposition \ref{h-asymptotic-behaviour-near-0-prop2} we get Theorem \ref{h-asymptotic-behaviour-thm2}.

\noindent{\bf Proof of Theorem \ref{blow-up-rate-at-origin-thm}}:
Without loss of generality we may assume that $\3=2$. Let $w$ be given by \eqref{w-defn} and 
\begin{equation}\label{q-defn}
q(r)=\frac{rh_r(r)}{h(r)}.
\end{equation}
We first claim that there exists a decreasing sequence $\{r_i\}_{i=1}^{\infty}\subset (0,\3)$ such that
\begin{equation}\label{qri-sequence-limit}
\lim_{i\to\infty}q(r_i)=-\alpha.
\end{equation} 
To prove the claim we note that by \eqref{w-defn},
\begin{equation}
w_r(r)=\alpha r^{\alpha-1}h(r)+r^{\alpha}h_r(r)=\frac{w(r)}{r}(\alpha+q(r))\quad\forall 0<r<\ve.
\end{equation}
For any $i\in\Z^+$ by the mean value theorem there exists $r_i\in (1/(2i),1/i)$ such that
\begin{equation}\label{wri-eqn}
w_r(r_i)=2i(w(1/i)-w(1/(2i)).
\end{equation}
By \eqref{h-origin-blow-up-rate}, \eqref{qri-sequence-limit} and \eqref{wri-eqn},
\begin{align*}
&|\alpha+q(r_i)|\le\frac{2ir_i|w(1/i)-w(1/(2i))|}{w(r_i)}\le\frac{2|w(1/i)-w(1/(2i))|}{w(r_i)}\quad\forall i\in\Z^+\\
\Rightarrow\quad&\lim_{i\to\infty}|\alpha+q(r_i)|=0
\end{align*}
and the claim follows. By \eqref{h-eqn} and a direct computation $q$ satisfies
\begin{equation}\label{q-eqn}
q_r(r)+\left(-\frac{1}{r}+\frac{\lambda}{2h(r)}+\frac{n-1}{2rh(r)}\right)q(r)=-\frac{1}{2r}\left(q(r)^2-\frac{(n-1)(h(r)-1)}{h(r)}\right)\quad\forall 0<r<\ve.
\end{equation}
Let
\begin{equation}
F(r)=\mbox{exp}\left(\frac{\lambda}{2}\int_0^r\frac{d\rho}{h(\rho)} +\frac{n-1}{2}\int_0^r\frac{ d\rho}{\rho h(\rho)}\right)\quad\forall 0<r<\ve.
\end{equation}
Then by \eqref{q-eqn},
\begin{align}
&(r^{-1}F(r)q(r))_r=-\frac{F(r)}{2r^2}\left(q(r)^2-\frac{(n-1)(h(r)-1)}{h(r)}\right)\quad\forall 0<r<\ve\notag\notag\\
\Rightarrow\quad&q(r)=\frac{1}{r^{-1}F(r)}(F(1)q(1)+I_1(r))\quad\forall 0<r<\ve.\label{q-integral-represenation}
\end{align}
where
\begin{equation}
I_1(r)=\int_r^1\frac{F(\rho)}{2\rho^2}\left(q(\rho)^2-\frac{(n-1)(h(\rho)-1)}{h(\rho)}\right)\,d\rho\quad\forall 0<r<\ve.
\end{equation}
We now divide the proof into two cases.

\noindent $\underline{\text{\bf Case 1}}$: $\underset{\substack{i\to\infty}}{\limsup}|I_1(r_i)|<\infty$.

\noindent By \eqref{qri-sequence-limit} and \eqref{q-integral-represenation},
\begin{equation*}
-\alpha=\lim_{i\to\infty}q(r_i)=0
\end{equation*}
which contradicts the assumption that $\alpha>0$. Hence case 1 does not hold.

\noindent $\underline{\text{\bf Case 2}}$: $\underset{\substack{i\to\infty}}{\limsup}|I_1(r_i)|=\infty$.

\noindent Then we may assume without loss of generality that $\lim_{i\to\infty}|I_1(r_i)|=\infty$.
Since $\alpha>0$, by \eqref{h-origin-blow-up-rate}, \eqref{qri-sequence-limit}, \eqref{q-integral-represenation} and the l'Hospital rule,
\begin{align*}
&-\alpha=\lim_{i\to\infty}q(r_i)=\lim_{i\to\infty}\frac{-\frac{F(r_i)}{2r_i^2}\left(q(r_i)^2-\frac{(n-1)(h(r_i)-1)}{h(r_i)}\right)}{-r_i^{-2}F(r_i)+r_i^{-1}F(r_i)\left(\frac{\lambda}{2 h(r_i)}+\frac{n-1}{2r_i h(r_i)} \right)}=\frac{\alpha^2-(n-1)}{2}\notag\\
\Rightarrow\quad&\alpha^2+2\alpha-(n-1)=0\notag\\
\Rightarrow\quad&\alpha=\sqrt{n}-1
\end{align*}
 and the theorem follows.

{\hfill$\square$\vspace{6pt}}

\section{Asymptotic behaviour of the function $a(t)$ near the origin}
\setcounter{equation}{0}
\setcounter{thm}{0}

In this section  we will prove the asymptotic behaviour of $a(t)$ near the origin. 

\begin{prop}\label{hr-neg-near-origin-prop1}
Let $2\le n\in\Z^+$, $\alpha=\sqrt{n}-1$, $\lambda\ge 0, c_1\in\Rr$, $c_0>0$  and let $c_2$ be given by \eqref{c2-defn}. For $n>4$, let $h\in C^2((0,\infty))$  be the unique solution of \eqref{h-eqn} in $(0,\infty)$ which satisfies \eqref{h-origin-blow-up-rate} and 
\eqref{h-asymptotic-near-origin1} for some constant  $0<\delta_0<1$ given by 
Theorem \ref{h-asymptotic-behaviour-thm1}. For $n\in\{2,3,4\}$ let $h\in C^2((0,\infty))$ be given by Theorem \ref{h-existence-thm} which satisfies \eqref{h-asymptotic-near-origin2} for some constant $0<\delta_0<1$. Then
\begin{equation}\label{a-near-origin-behaviour}
a(t)\approx (\sqrt{nc_0}t)^{1/\sqrt{n}}\quad\mbox{as }t\to 0^+.
\end{equation}
\end{prop}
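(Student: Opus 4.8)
The plan is to recover the behaviour of $a(t)$ near $t\to 0^+$ directly from the relation \eqref{t-at-relation}, namely $t=\int_0^{a(t)}\frac{d\rho}{\sqrt{h(\rho^2)}}$, together with the asymptotics of $h$ near the origin supplied by Theorem \ref{h-asymptotic-behaviour-thm1} (for $n>4$) and Theorem \ref{h-asymptotic-behaviour-thm2} (for $n\in\{2,3,4\}$). In all cases the leading behaviour of $h$ near $r=0$ is $h(r)=c_0 r^{-\alpha}(1+o(1))$ with $\alpha=\sqrt{n}-1$, so that for $\rho$ small, $h(\rho^2)=c_0\rho^{-2\alpha}(1+o(1))=c_0\rho^{-2(\sqrt n-1)}(1+o(1))$ and hence $\frac{1}{\sqrt{h(\rho^2)}}=\frac{1}{\sqrt{c_0}}\rho^{\sqrt n-1}(1+o(1))$.

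First I would fix $\eta>0$ and use the cited asymptotic expansions to choose $\delta_0>0$ small enough that $(1-\eta)c_0\rho^{-2(\sqrt n-1)}\le h(\rho^2)\le (1+\eta)c_0\rho^{-2(\sqrt n-1)}$ for all $0<\rho\le\sqrt{\delta_0}$; this is where one must be slightly careful, since the subleading terms in \eqref{h-asymptotic-near-origin1}, \eqref{h-asymptotic-near-origin2} include logarithmic factors (the $r^{\alpha+1}\log r$ and, for $n=4$, $r^2\log r$ terms), but these are all of lower order than the leading $c_0 r^{-\alpha}$, so the two-sided bound holds after shrinking $\delta_0$. Next I would integrate: for $0<a\le\sqrt{\delta_0}$,
\begin{equation*}
\frac{(1-\eta)^{1/2}}{\sqrt{c_0}}\int_0^{a}\rho^{\sqrt n-1}\,d\rho\;\le\;\int_0^{a}\frac{d\rho}{\sqrt{h(\rho^2)}}\;\le\;\frac{(1+\eta)^{1/2}}{\sqrt{c_0}}\int_0^{a}\rho^{\sqrt n-1}\,d\rho,
\end{equation*}
and since $\int_0^{a}\rho^{\sqrt n-1}\,d\rho=\frac{a^{\sqrt n}}{\sqrt n}$, this gives, with $a=a(t)$,
\begin{equation*}
\frac{(1-\eta)^{1/2}}{\sqrt n\,\sqrt{c_0}}\,a(t)^{\sqrt n}\;\le\;t\;\le\;\frac{(1+\eta)^{1/2}}{\sqrt n\,\sqrt{c_0}}\,a(t)^{\sqrt n}
\end{equation*}
for all $t$ small enough (using \eqref{a0=0} to ensure $a(t)\le\sqrt{\delta_0}$ for small $t$, and that $a$ is strictly increasing by \eqref{a-t-relation} since $h>0$). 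Rearranging, $(1+\eta)^{-1/(2\sqrt n)}(\sqrt{n c_0}\,t)^{1/\sqrt n}\le a(t)\le (1-\eta)^{-1/(2\sqrt n)}(\sqrt{n c_0}\,t)^{1/\sqrt n}$.

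Finally, since $\eta>0$ was arbitrary, letting $\eta\to 0^+$ yields $\lim_{t\to 0^+} a(t)/(\sqrt{n c_0}\,t)^{1/\sqrt n}=1$, which is exactly \eqref{a-near-origin-behaviour}. The main (and only real) obstacle is the bookkeeping in the first step: verifying that the correction terms in the expansions of $h$ from Theorems \ref{h-asymptotic-behaviour-thm1} and \ref{h-asymptotic-behaviour-thm2} — in particular the ones involving $\log r$ — are genuinely $o(r^{-\alpha})$ so that the clean two-sided bound on $h(\rho^2)$ is available on a full interval $(0,\sqrt{\delta_0}]$; once that is in place the integration and limit are routine. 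One should also note that the integrals $\int_0^a \rho^{\sqrt n-1}d\rho$ converge at $0$ precisely because $\sqrt n-1=\alpha>0$, consistent with $h$ being integrable enough in \eqref{t-at-relation} for $t$ to be finite and tend to $0$.
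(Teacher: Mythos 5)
Your proposal is correct and follows essentially the same route as the paper: substitute the leading asymptotics $h(r)=c_0r^{-\alpha}(1+o(1))$ from Theorems \ref{h-asymptotic-behaviour-thm1} and \ref{h-asymptotic-behaviour-thm2} into \eqref{t-at-relation}, integrate $\rho^{\sqrt{n}-1}$, and conclude $t\approx a(t)^{\sqrt{n}}/\sqrt{nc_0}$. The only difference is cosmetic: you carry explicit two-sided $(1\pm\eta)$ bounds where the paper expands $(h(\rho^2))^{-1/2}=c_0^{-1/2}(\rho^{\alpha}+O(\rho^{3\alpha}))$ (with the logarithmic correction for $n=4$) directly.
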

\begin{proof}
By \eqref{h-asymptotic-near-origin1}  and \eqref{h-asymptotic-near-origin2},
\begin{align}\label{h-inverse-expansion}
(h(\rho^2))^{-1/2}=&\left\{\begin{aligned}
&c_0^{-1/2}\rho^{\alpha}\left(1+O(\rho^{2\alpha})\right)^{-1/2}\qquad\quad\,\,\mbox{ if }n\ne 4\\
&c_0^{-1/2}\rho\left(1+O(\rho^4|\log\rho|^2)\right)^{-1/2}\quad\mbox{ if }n=4
\end{aligned}\right.\notag\\
=&\left\{\begin{aligned}
&c_0^{-1/2}\left(\rho^{\alpha}+O(\rho^{3\alpha})\right)\qquad\qquad\quad\,\,\mbox{ if }n\ne 4\\
&c_0^{-1/2}\left(\rho+O(\rho^5|\log\rho|^2)\right)\quad\qquad\mbox{ if }n=4.
\end{aligned}\right.
\end{align} 
By \eqref{t-at-relation} and \eqref{h-inverse-expansion},
\begin{equation*}
t\approx\frac{a(t)^{\sqrt{n}}}{\sqrt{nc_0}}\quad\mbox{ as }t\to 0^+
\end{equation*}
and \eqref{a-near-origin-behaviour} follows.

\end{proof}

By a similar argument we have the following proposition.

\begin{prop}\label{hr-neg-near-origin-prop2}
Let $2\le n\in\Z^+$, $\alpha=\sqrt{n}-1$, $\lambda, c_1\in\Rr$, $c_0>0$  and let $c_2$ be given by \eqref{c2-defn}. For $n>4$, let $h\in C^2((0,\ve])$  be the unique solution of \eqref{h-eqn} in $(0,\ve]$ which satisfies \eqref{h-origin-blow-up-rate} and 
\eqref{h-asymptotic-near-origin1} for some constants  $0<\delta_0<\ve<1$, given by 
Proposition \ref{h-asymptotic-behaviour-near-0-prop1}. For $n\in\{2,3,4\}$ let $h\in C^2((0,\ve])$ be given by Corollary \ref{h-local-existence-cor1} which satisfies \eqref{h-asymptotic-near-origin2} for some constants $0<\delta_0<\ve<1$. Then \eqref{a-near-origin-behaviour} holds.
\end{prop}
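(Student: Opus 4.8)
The plan is to observe that the proof of Proposition~\ref{hr-neg-near-origin-prop1} uses nothing about the global solution beyond its asymptotic behaviour near the origin, so it carries over word for word to the local solution. Indeed, that proof invokes only the expansions \eqref{h-asymptotic-near-origin1} and \eqref{h-asymptotic-near-origin2} of $h$ near $r=0$ together with the relation \eqref{t-at-relation} between $t$ and $a(t)$. For the local solution $h\in C^2((0,\ve])$ considered here the same expansions are available: \eqref{h-asymptotic-near-origin1} holds by Proposition~\ref{h-asymptotic-behaviour-near-0-prop1} when $n>4$, and \eqref{h-asymptotic-near-origin2} holds, for the solution furnished by Corollary~\ref{h-local-existence-cor1}, by Proposition~\ref{h-asymptotic-behaviour-near-0-prop2} when $n\in\{2,3,4\}$.

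First I would check that the function $a=a(t)$ attached to the local solution via \eqref{a-t-relation} and \eqref{a0=0} is well defined for small $t>0$: writing $t=t(a)$, one has $\frac{dt}{da}=(h(a^2))^{-1/2}=O(a^{\alpha})$ with $\alpha=\sqrt n-1>-1$, so $t(a)=\int_0^a(h(\rho^2))^{-1/2}\,d\rho$ is finite, continuous and strictly increasing with $t(0)=0$, and its inverse $a(t)$ satisfies $a(t)\to0$ as $t\to0^+$ and obeys \eqref{t-at-relation}. Next, from \eqref{h-asymptotic-near-origin1} (if $n>4$) or \eqref{h-asymptotic-near-origin2} (if $n\in\{2,3,4\}$) I would derive the expansion \eqref{h-inverse-expansion} for $(h(\rho^2))^{-1/2}$ on $0<\rho\le\delta_0^{1/2}$, exactly as in the proof of Proposition~\ref{hr-neg-near-origin-prop1}. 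Inserting this into \eqref{t-at-relation} and integrating from $0$ to $a(t)$ gives $t=\frac{a(t)^{\sqrt n}}{\sqrt{nc_0}}(1+o(1))$ as $t\to0^+$, since $\alpha+1=\sqrt n$ and the error terms integrate to quantities of order lower than $a(t)^{\sqrt n}$; solving for $a(t)$ yields \eqref{a-near-origin-behaviour}.

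Since the argument is essentially a repetition of the earlier one, I do not anticipate any genuine obstacle: the only point requiring a moment's care is the verification that the local existence results indeed supply the asymptotic expansions \eqref{h-asymptotic-near-origin1}, \eqref{h-asymptotic-near-origin2} near the origin and not merely the leading blow-up rate \eqref{h-origin-blow-up-rate}, which is precisely what Proposition~\ref{h-asymptotic-behaviour-near-0-prop1} and Proposition~\ref{h-asymptotic-behaviour-near-0-prop2} provide. (In fact the leading rate \eqref{h-origin-blow-up-rate} alone already forces \eqref{a-near-origin-behaviour}, since it gives $(h(\rho^2))^{-1/2}=c_0^{-1/2}\rho^{\alpha}(1+o(1))$ with $\alpha+1=\sqrt n$.)
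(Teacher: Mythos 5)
Your proposal is correct and follows essentially the same route as the paper: the paper proves this proposition simply by repeating the argument of Proposition \ref{hr-neg-near-origin-prop1}, which only uses the local expansions \eqref{h-asymptotic-near-origin1}, \eqref{h-asymptotic-near-origin2} (supplied for the local solutions by Proposition \ref{h-asymptotic-behaviour-near-0-prop1} and Proposition \ref{h-asymptotic-behaviour-near-0-prop2}) to obtain \eqref{h-inverse-expansion} and then integrates via \eqref{t-at-relation}. Your added check that $a(t)$ is well defined near $t=0$ and your remark that the leading rate \eqref{h-origin-blow-up-rate} already suffices are harmless refinements of the same argument.
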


\end{document}